\newtheorem{thm}{Theorem}[section]
\newtheorem{defi}[thm]{Definition}
\newtheorem{prop}[thm]{Proposition}
\newtheorem{lem}[thm]{Lemma}
\newtheorem{cor}[thm]{Corollary}
\newtheorem{conj}[thm]{Conjecture}
\newtheorem{rem}[thm]{Remark}
\newtheorem{ex}[thm]{Example}
\newcommand{\ar}{\textnormal{area}}
\newcommand{\Id}{\textnormal{Id}}
\renewcommand{\S}{\mathbf{S}}
\newcommand{\M}{\mathbf{M}}
\newcommand{\D}{\mathbf{D}}
\newcommand{\s}{\mathfrak{s}}
\newcommand{\Z}{\mathbb{Z}}
\newcommand{\st}{(\mathbf{s},\mathbf{t})}
\newcommand{\overDelta}[1]{\overline{\Delta}_{\textit{ }#1}}
\newcommand{\underDelta}[1]{\underline{\Delta}_{\textit{ }#1}}
\newcommand{\OP}{\textnormal{Op}}
\newcommand{\CT}{\textnormal{CT}}
\newcommand{\Sgrp}{\mathfrak{S}}
\newcommand{\sgn}{\textnormal{sgn}}
\newcommand{\red}[1]{{\color{red}{#1}}}
\newcommand{\green}[1]{{\color{green}{#1}}}
\title[Refined enumerations of ASTs]{Refined enumerations of alternating sign triangles}
\author{Florian Aigner}
\address{Florian Aigner, Universit\"at Wien, Fakult\"at f\"ur Mathematik, Oskar-\linebreak Morgenstern-Platz~1, 1090 Wien, Austria}
\email{florian.aigner@univie.ac.at}
\thanks{Supported by the Austrian Science Fund FWF, START grant Y463.}
\keywords{Alternating sign triangles, alternating sign trapezoids, polynomial enumeration formula, centred Catalan sets, Motzkin paths, constant term identity}
\begin{document}


\begin{abstract}
This article introduces and investigates a refinement of alternating sign trapezoids by means of Catalan objects and Motzkin paths. Alternating sign trapezoids are a generalisation of alternating sign triangles, which were recently introduced by Ayyer, Behrend and Fischer. We show that the number of alternating sign trapezoids associated with a Catalan object (resp. a Motzkin path) is a polynomial function in the length of the shorter base of the trapezoid. We also study the rational roots of these polynomials and formulate several conjectures and derive some partial results. Lastly, we deduce a constant term identity for the refined counting of alternating sign trapezoids.
\end{abstract}


\maketitle

\section{Introduction}
In the 1980s Mills, Robbins and Rumsey \cite{MillsRobbinsRumsey82,RobbinsRumsey86} introduced \emph{alternating sign matrices} (ASMs) and thereby founded a new branch in combinatorics. Over time, more combinatorial objects, equinumerous to ASMs, were introduced, e.g.\ \emph{fully packed loops} (FPLs). FPLs led naturally to a refined enumeration using Catalan objects. This refinement gave rise to a variety of important results, one of them is the Razumov-Stroganov-Cantini-Sportiello Theorem \cite{CantiniSportiello11,RazumovStroganov01}. In \cite{AyyerBehrendFischer1611.03823}, Ayyer, Behrend and Fischer introduced  \emph{alternating sign triangles} (ASTs) and showed that they are equinumerous to ASMs. While there exists an easy bijection between ASMs and FPLs, finding a bijection between ASMs and ASTs is still an open problem.\\

In this paper we introduce and investigate a refined enumeration of ASTs using \emph{centred Catalan sets}, objects enumerated by the Catalan numbers and Motzkin paths (Ayyer \cite{Ayyer}). As we will see in Section \ref{sec: roots}, it makes sense to look at both refinements, even though the Motzkin path refinement is coarser, since we may deduce statements and make conjectures for this refinement that are not true for the centred Catalan set refinement.

The structure of ASTs and their associated centred Catalan set lead naturally to the introduction of \emph{AS-trapezoids}. AS-trapezoids arise by generalising ASTs from a triangular to a trapezoidal shape, and have by definition the following property: by taking an AST of order $n$ and putting an $(m,2n)$-AS-trapezoid on top of it, one obtains an AST of order $m+n$. In addition, the associated centred Catalan set (resp. Motzkin path) of the AST of order $m+n$ is the concatenation of the centred Catalan set (resp. Motzkin path) associated with the AST of order $n$ and the $(m,2n)$-AS-trapezoid. In Lemma \ref{lem: splitting lemma} we show that the converse is also true. In particular, every AST whose associated centred Catalan set (resp. Motzkin path) can be split into two parts can also be split into two parts: a smaller AST corresponding to the first centred Catalan set (resp. Motzkin path) and an AS-trapezoid corresponding to the second. 
 Following \cite{Ayyer}, we define the weight function $w_l(S)$ (resp. $w_l(M)$) as the number of $(n,l)$-AS-trapezoids with associated centred Catalan set $S$ of size $n-1$ (resp. Motzkin path $M$ of length $n$). Exploiting the splitting property of ASTs into an AST and an AS-trapezoid leads to our first main result.

\begin{prop}
\label{prop: multiplicity of weights}
Let $S_1,S_2$ be centred Catalan sets and $M_1, M_2$ be Motzkin paths. We have
\begin{align}
\label{eq: multiplicity of Catalan weights}
w_l(S_1 \circ S_2)&=w_l(S_1)w_{l+2|S_1|-2}(S_2),\\
\label{eq: multiplicity of Motzkin weights}
w_l(M_1 \circ M_2)&=w_l(M_1)w_{l+2|M_1|}(M_2),
\end{align}
i.e., the weight functions are multiplicative.
\end{prop}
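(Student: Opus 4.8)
The plan is to derive both identities from the splitting result of Lemma~\ref{lem: splitting lemma} together with its converse, the ``stacking'' operation recorded in the introduction. Granting that splitting an object along a place where its associated centred Catalan set (resp. Motzkin path) decomposes as a concatenation, and stacking, are mutually inverse bijections, the proof becomes a bijective counting argument: I will match the $(n,l)$-AS-trapezoids counted by $w_l(S_1\circ S_2)$ with the pairs consisting of one AS-trapezoid counted by $w_l(S_1)$ and one AS-trapezoid counted by $w_{l+2|S_1|-2}(S_2)$, and likewise on the Motzkin side.

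For \eqref{eq: multiplicity of Catalan weights}, let $T$ be an $(n,l)$-AS-trapezoid with associated centred Catalan set $S_1\circ S_2$. Stacking $T$ on top of an arbitrary AST on which it may be placed and then applying Lemma~\ref{lem: splitting lemma} at the two cut points of the resulting concatenated centred Catalan set (equivalently, applying the AS-trapezoid analogue of the lemma directly to $T$) splits $T$, uniquely, into an AS-trapezoid $T_1$ with associated centred Catalan set $S_1$ carrying the same shorter base $l$ as $T$, and an AS-trapezoid $T_2$ with associated centred Catalan set $S_2$ stacked on top of $T_1$. Since the associated centred Catalan set of $T_1$ has size $|S_1|$, the trapezoid $T_1$ is an $(|S_1|+1,l)$-AS-trapezoid, and thus ranges over exactly the objects counted by $w_l(S_1)$; the shorter base of $T_2$ is the longer base of $T_1$, which is determined by $l$ and by the number of rows of $T_1$, and a direct computation from the definition of the shorter base of an AS-trapezoid gives the value recorded in \eqref{eq: multiplicity of Catalan weights}. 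By the stacking direction every such pair $(T_1,T_2)$ with matching bases assembles back to a unique admissible $T$, so $T\mapsto(T_1,T_2)$ is the desired bijection and \eqref{eq: multiplicity of Catalan weights} follows by counting.

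Identity \eqref{eq: multiplicity of Motzkin weights} is proved in precisely the same fashion, using the Motzkin-path form of Lemma~\ref{lem: splitting lemma}. The only change is in the bookkeeping of the shifted index: an $(n,l)$-AS-trapezoid carries a Motzkin path of length $n$ rather than a centred Catalan set of size $n-1$, so the first piece $T_1$ with associated Motzkin path $M_1$ is an $(|M_1|,l)$-AS-trapezoid, and tracking its longer base through the definitions yields the shorter base of $T_2$ as $l+2|M_1|$. The difference between this shift and $2|S_1|-2$ is exactly the difference in normalisation between the two refinements, and is one reason it is worthwhile to keep both.

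The whole argument is short once Lemma~\ref{lem: splitting lemma} is available, and I expect the only genuinely delicate points to lie inside it: that the splitting at a concatenation point is canonical, so that splitting and stacking really are mutually inverse (hence $T\mapsto(T_1,T_2)$ is a bijection, not merely a surjection), and that stacking imposes no compatibility condition relating $T_1$ and $T_2$ beyond the matching of their bases. The computation of the shifted second index from the shape of an AS-trapezoid is routine.
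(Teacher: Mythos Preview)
Your approach is essentially the paper's own: the proposition is stated as ``a direct consequence'' of Lemma~\ref{lem: splitting lemma}, and you correctly identify that the lemma furnishes both directions of a bijection between $(|S_1\circ S_2|-1,l)$-AS-trapezoids with centred Catalan set $S_1\circ S_2$ and pairs $(T_1,T_2)$ as described, from which the product formula follows by counting.

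Two small remarks. First, the detour through ASTs is unnecessary: Lemma~\ref{lem: splitting lemma} is already formulated for AS-trapezoids, so you may split $T$ directly without stacking it on an AST first. Second, there is an indexing slip: an $(n,l)$-AS-trapezoid has associated centred Catalan set of size $n+1$, not $n-1$, so $T_1$ is an $(|S_1|-1,l)$-AS-trapezoid, not $(|S_1|+1,l)$; since you defer the actual base computation to ``the value recorded in \eqref{eq: multiplicity of Catalan weights}'' this does not propagate into an error, but it is worth correcting.
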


In Theorem \ref{thm: AS-ts} we present an enumeration formula for the number of AS-trapezoids. This formula was first conjectured by Behrend \cite{ASt-conj} and later independently by the author. The proof was found by Behrend and Fischer \cite{ASt-conj} using the six-vertex model and by Fischer \cite{Fischer1804.08681} using the operator formula.\\

We establish a bijection between the AS-trapezoids corresponding to a given centred Catalan set $S$ to $\st$-trees, which were defined in \cite{Fischer1804.03630}, but had already appeared as \emph{partial monotone triangles} in \cite{Fischer11}, and are a generalisation of monotone triangles. By analysing the structure of the $\st$-trees, we can deduce our second main result.

\begin{thm}
 \label{thm: Catalan weight is polynomial}
 Let $S=S_1 \circ \ldots \circ S_k$ be a centred Catalan set and $S_1,\ldots, S_k$ its irreducible components. The weight $w_l(S)$ is a polynomial in $l$ of degree $|\lambda(S)/\mu(S)|=\ar(\M(S))$ with leading coefficient $\prod_{i=1}^k\frac{f^{\lambda(S_i)/ \mu(S_i)}}{|\lambda(S_i)/\mu(S_i)|!}$, where $f^{\lambda/ \mu}$ denotes the number of standard Young tableaux of skew shape $\lambda/ \mu$.
\end{thm}

Remarkably, there exists an analogous theorem for the refined enumeration of FPLs for a specific class of non-crossing matchings. This theorem was conjectured in \cite{Zuber04} and proved in \cite{Aigner_FPL, CaselliKrattenthalerLassNadeau04}. A detailed explanation is provided in Remark \ref{rem: ASTs and FPLs}.\\

Using the enumeration formula for $\st$-trees from \cite{Fischer1804.03630}, we can deduce a constant term formula for the refined enumeration of AS-trapezoids. Essentially, this formula generalises a constant term formula for refined ASTs \cite[Theorem 2]{Fischer1804.03630}.

\begin{thm}
\label{thm: constant term identity}
Let $S=\{s_1, \ldots, s_u,0,s_{u+1}, \ldots, s_n\}$ be a centred Catalan set with $s_1< \ldots < s_u \leq -1, 1 \leq s_{u+1} < \ldots < s_n$. The number $w_l(S)$ of $(n,l)$-AS-trapezoids with centred Catalan set $S$ is equal to the constant term of
\[
\prod_{i=1}^n x_i^{-n-s_i}\prod_{i=u+1}^n x_i^{2-l}(1+x_i)^l
\prod_{1\leq i < j \leq n}(x_j-x_i)(1+x_i+x_ix_j),
\]
in $x_1,\ldots, x_n$.
\end{thm}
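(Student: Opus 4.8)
The plan is to reduce Theorem~\ref{thm: constant term identity} to the bijection between $(n,l)$-AS-trapezoids with centred Catalan set $S$ and $\st$-trees announced in the excerpt, and then to the enumeration formula for $\st$-trees from \cite{CAST}. First I would make the parameters $(\mathbf{s},\mathbf{t})$ explicit: given $S=\{s_1,\dots,s_u,0,s_{u+1},\dots,s_n\}$, the data $\mathbf{s}$ encodes the ``bottom row'' positions and $\mathbf{t}$ the ``shape'' of the tree coming from $S$, while $l$ enters as the remaining free parameter that scales the size of the trapezoid. The first step is therefore to recall the precise statement of the $\st$-tree count from \cite{CAST}, which is itself a constant term identity, and to match it termwise against the claimed product; the factors $\prod_i x_i^{-n-s_i}$ and the Vandermonde-type product $\prod_{i<j}(x_j-x_i)(1+x_i+x_ix_j)$ should come directly from the $\st$-tree formula, and only the factor $\prod_{i=u+1}^n x_i^{2-l}(1+x_i)^l$ should depend on $l$.

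Second, I would verify that the bijection ``AS-trapezoids with Catalan set $S$'' $\leftrightarrow$ ``$\st$-trees'' is compatible with this parametrisation, i.e. that increasing $l$ corresponds exactly to inserting $l$ additional rows (or columns) of a prescribed type into the $\st$-tree, and that these insertions contribute precisely the binomial-generating factor $(1+x_i)^l$ with the shift $x_i^{2-l}$. Concretely, each of the $n-u$ indices $i>u$ (the entries of $S$ that lie to the right of the central $0$, or equivalently the columns that survive in the shorter base) carries a summation over how far a lattice path travels, and summing a geometric-type series over $l$ steps yields the $(1+x_i)^l$ with the stated normalisation. This is the computational heart of the argument, but it is essentially a bookkeeping check once the right indices are identified.

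Third, with both constant-term expressions in hand, I would show they agree by direct manipulation: after clearing the $l$-independent parts using the $\st$-tree identity of \cite{CAST} (applied with the shifted entries $s_i+\text{const}$ in place of the original ones, to absorb the $l$-dependence into the ground data), the remaining identity is a polynomial identity in $l$ that can be checked by, say, comparing both sides as polynomials in $l$ of the degree predicted by Theorem~\ref{thm: Catalan weight is polynomial}, or more cleanly by a generating-function argument: multiply by $z^l$, sum over $l\ge 0$, and recognise both sides as the constant term of the same rational function. Finally, one checks the edge cases $u=0$ and $u=n$ (the central $0$ adjacent to one end), where the product $\prod_{i=u+1}^n$ is either empty or full, to make sure no boundary term is lost.

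The main obstacle I anticipate is the second step: pinning down exactly which variables $x_i$ the factor $(1+x_i)^l$ attaches to and with which exponent shift, since the bijection to $\st$-trees and the role of $l$ as ``length of the shorter base'' are stated only at a high level in the excerpt. Getting the index set right (the $n-u$ positions $u+1,\dots,n$ rather than, say, all $n$ positions or the $u$ negative ones) and the shift $x_i^{2-l}$ exactly right — rather than off by a constant — will require carefully tracking how the splitting property of Lemma~\ref{lem: splitting lemma} and the multiplicativity of Proposition~\ref{prop: multiplicity of weights} interact with the $\st$-tree encoding. Once that dictionary is fixed, the rest is a routine, if slightly lengthy, constant-term computation.
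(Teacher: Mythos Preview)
Your proposal starts in the right place --- the bijection to $\st$-trees (Proposition~\ref{prop: bij s,t-trees}) and the enumeration formula from \cite{CAST} --- but it misidentifies both the nature of that formula and the mechanism by which $l$ enters, and it misses the key technical step.

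First, the $\st$-tree count from \cite{CAST} (Theorem~\ref{thm: s_t_trees} here) is \emph{not} a constant term identity ready to be matched; it is an operator expression
\[
\left.\left(\prod_{i=1}^{u}(-\overDelta{x_i})^{-s_i-1}\prod_{i=u+1}^n \underDelta{x_i}^{s_i-1}\right) M_n(x_1,\ldots,x_n)\right|_{\mathbf{x}=\mathbf{k}},
\]
and the entire content of the proof is converting this into a constant term. The paper does this via the elementary but crucial identity
\[
p(E_{a_1},\ldots,E_{a_n})f(a)=\CT_x \prod_i x_i^{-a_i}\,p(x_1^{-1},\ldots,x_n^{-1})\,F(x_1,\ldots,x_n),
\]
where $F$ is the generating function of $f$. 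You never name this step, and without it there is no passage from operators to constant terms.

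Second, your picture of how $l$ enters is wrong. The shape of the $\st$-tree is determined by $S$ alone; $l$ appears only in the bottom entries $\mathbf{k}=(s_1+1,\ldots,s_u+1,\,l+s_{u+1}-1,\ldots,l+s_n-1)$, i.e.\ as a shift $E_{x_i}^{l}$ for $i>u$. There are no ``$l$ additional rows inserted'' and no geometric series; the factor $(1+x_i)^l$ arises because, after rewriting the difference operators acting on $\binom{x_i+l}{\sigma(i)-s_i}$ as shifts in $s_i$, one computes the generating function $\sum_{s}\binom{l}{\sigma-s}x^s=x^{\sigma-l}(1+x)^l$. The companion identity for $i\le u$, namely $\sum_s (-1)^s\binom{s}{\sigma+s}x^s=(-x)^{-1}(-1-x^{-1})^{\sigma-1}$, produces the remaining factors, and a Vandermonde evaluation of the resulting determinant gives $\prod_{i<j}(x_j-x_i)(1+x_i+x_ix_j)$.

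Finally, your third step --- comparing both sides as polynomials in $l$, or summing $z^l$ over $l\ge 0$ --- is unnecessary. The argument is a direct derivation, not a verification of a guessed identity; once the operator-to-constant-term conversion and the two generating-function evaluations are in place, the claimed product falls out with no further matching.
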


The final section of this paper describes the rational roots of the weight functions $w_l(S)$ and $w_l(M)$. Both seem to have a rich structure of rational roots that is currently not completely understood. We present our first analysis of their structure in the form of conjectures and partial results. The goal is to find a description of all rational roots using combinatorial methods, similar to \cite{FonsecaNadeau11} for the number of FPLs associated with certain families of link patterns.\\

The paper is structured as follows. In Section \ref{sec: preliminaries} we define centred Catalan sets, Motzkin paths, ASTs and AS-trapezoids and establish the relationships between them. In Section \ref{sec: structure of AS-ts}, we present a relation between the inner structure of an AS-trapezoid and the centred Catalan path it corresponds to. This allows us to deduce the splitting lemma (Lemma \ref{lem: splitting lemma}), which leads directly to our first main result.
In Section \ref{sec: refined enum of AS-ts}, we relate AS-trapezoids to $\st$-trees and derive our second main result. Section \ref{sec: const term} contains the derivation of the constant term identity for the refined enumeration of AS-trapezoids. Finally, in Section \ref{sec: roots} we present conjectures and partial results concerning the rational roots of $w_l(S)$ and $w_l(M)$.\\

An extended abstract of this paper was published in the proceedings of FPSAC 2017 \cite{Aigner_AST0}.


\section{Preliminaries}
\label{sec: preliminaries}

\subsection{Centred Catalan sets, Dyck and Motzkin paths}

We introduce new combinatorial objects that are enumerated by the Catalan numbers and explore their relationship to Dyck paths.


\begin{defi}
A \emph{centred Catalan set} $S$ of size $n$ is an $n$-subset of $\{-n+1,-n+2,\ldots, n-1\}$ such that $|S \cap \{-i,-i+1, \ldots, i\}| \geq i+1$ for all $0 \leq i \leq n-1$; in particular $0 \in S$.
\end{defi}

\begin{ex}
The centred Catalan sets of size $3$ are
\begin{align*}
\{0,1,2\}, \quad
\{0,-1,2\}, \quad
\{0,1,-2\}, \quad
\{0,-1,-2\}, \quad
\{-1,0,1\}.
\end{align*}
\end{ex}

Centred Catalan sets of size $n$ are in bijection with Dyck paths of length $2n$. For a given centred Catalan set $S$, we can construct a Dyck path $\D(S)$ in the following way. We read the integers $-n+1, \ldots, n-1,n$ in the order $0,-1,1,-2,2,\ldots,-n+1,n-1,n$ and draw a north-east step if the number is in $S$ and a south-east step otherwise. For an example see Figure \ref{fig: cCs and Dyck}.
In fact, there are $2^{n-1}$ different bijections of the above kind between centred Catalan sets of size $n$ and Dyck paths of length $2n$. For every $1 \leq i \leq n-1$, we can switch the order of reading $-i,i$ in the above algorithm and obtain a new bijection.\\

\begin{figure}
$\{-3,-1,0,1,3,4\} \quad \Leftrightarrow \quad$
 \centering
 \includegraphics[scale=1.5]{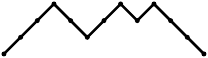}
 \caption{\label{fig: cCs and Dyck} A centred Catalan set of size $6$ and its corresponding Dyck path.}
\end{figure}

Let $l$ be an integer and define the dilation operator $\s_l: \Z \rightarrow \Z$ as
\[
\s_l(x)=
\begin{cases}
x+l \quad &x>0,\\
0 & x=0,\\
x-l & x<0.
\end{cases}
\]
By abuse of notation we write $\s_l: 2^\Z \rightarrow 2^\Z, \;\s_l(A)=\{\s_l(x)|x\in A\}$. The \emph{concatenation} $S_1 \circ S_2$ of two centred Catalan sets $S_1,S_2$ is defined as
\[
S_1 \circ S_2 := S_1 \cup \s_{|S_1|-1}(S_2).
\]

\begin{ex}
The concatenation of the centred Catalan sets $S_1 = \{-1,0,1\}$ and $S_2=\{-1,0,1,2\}$ is
\[
S_1 \circ S_2 = \{-1,0,1\} \cup \s_{2}(\{-1,0,1,2\}) = \{-3,-1,0,1,3,4\}.
\]
\end{ex}

We call a centred Catalan set $S$ \emph{irreducible} if there exist no centred Catalan sets $S_1,S_2$ of size at least $2$ such that $S=S_1 \circ S_2$. It is not hard to convince oneself that every centred Catalan set can be written uniquely as the concatenation of irreducible centred Catalan sets.
For two centred Catalan sets $S_1,S_2$, the Dyck path $\D(S_1 \circ S_2)$ is obtained by deleting the last step of $\D(S_1)$ and the first step of $\D(S_2)$, and concatenating both paths, note that one does not simply add the paths $\D(S_1)$ and $\D(S_2)$, see Figure \ref{fig: Splittings}.\\


A \emph{Motzkin path} of length $n$ is a path on the half-plane $y \geq 0$ starting at $(0,0)$ and ending at $(n,0)$ with step-set $\{(1,1),(1,0),(1,-1)\}$. In the following, we encode a Motzkin path $M$ of length $n$ with a sequence $M=(m_1,\ldots,m_n)$ with $m_i \in \{1,0,-1\}$ for $1 \leq i \leq n$, where $m_i$ corresponds to the step $(1,m_i)$. The concatenation of two Motzkin paths is given by attaching the second path at the end of the first one. A Motzkin path is called \emph{irreducible} iff it cannot be written as a concatenation of non-empty Motzkin paths.

\begin{ex}
The following are all Motzkin paths of length $3$, represented graphically and as sequences.
\[
\begin{array}{cccc}
\includegraphics[scale=1]{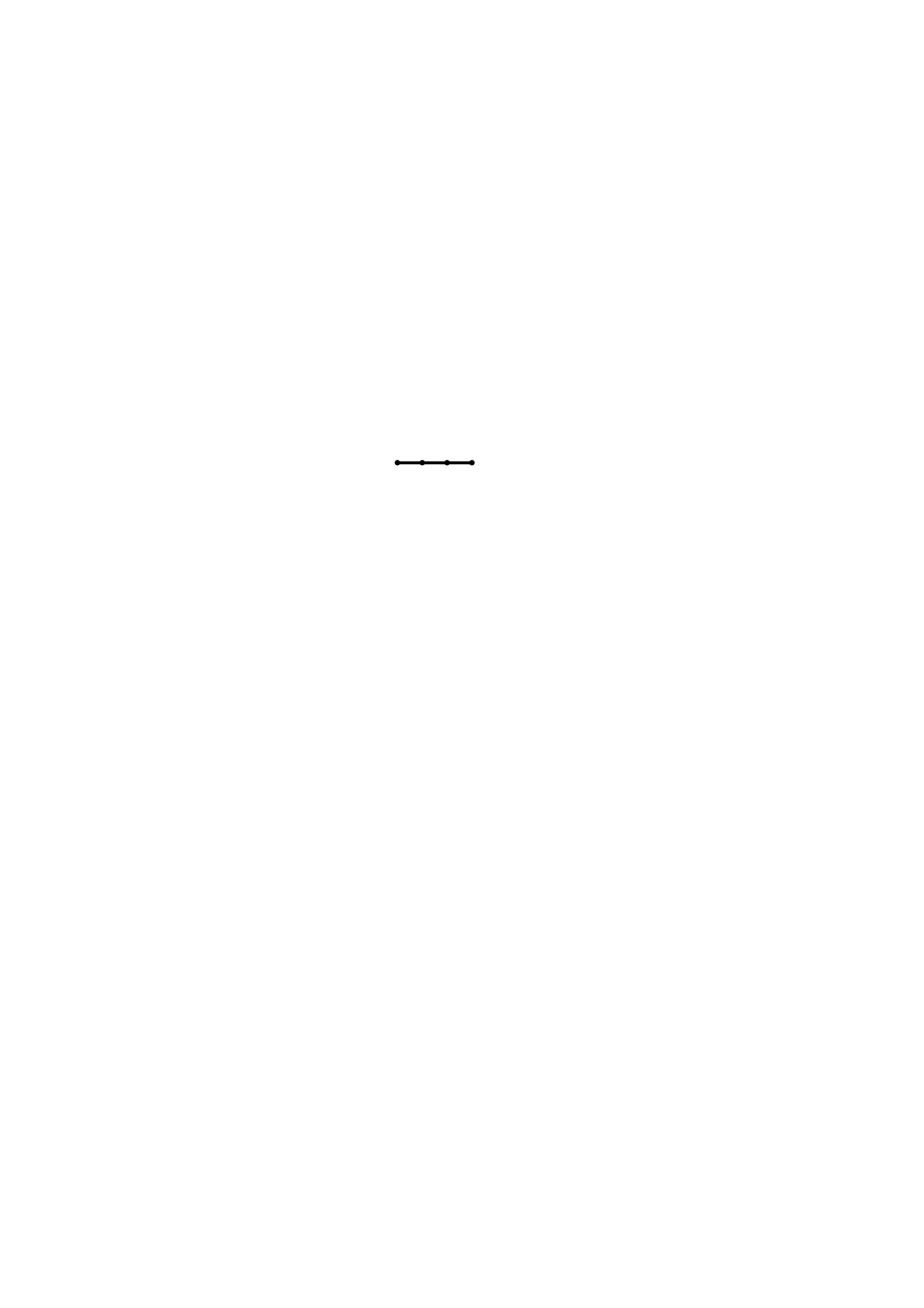} & \includegraphics[scale=1]{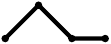} & \includegraphics[scale=1]{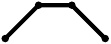} & \includegraphics[scale=1]{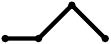}\\
(0,0,0) & (1,-1,0) & (1,0,-1) & (0,1,-1)
\end{array}
\]
\end{ex}

We define $\M(D)$ to be the Motzkin path of size $n-1$ obtained by ``averaging'' the steps in the Dyck path $D$ of size $2n$: the $i$-th step of $\M(D)$ is the average of the $(2i)$-th and $(2i+1)$-st step of $D$. By averaging we mean that two north-east steps result in a north-east step, a north-east step and a south-east step in an east step and two south-east steps in a south-east step. This map is a surjection from Dyck paths of length $2n$ to Motzkin paths of length $n-1$. 
For a centred Catalan set $S$ of size $n$, the Motzkin path $\M(S):=\M(\D(S))=(m_1(S), \ldots,m_{n-1}(S))$ is given by
\[
m_i(S):= |\{-i,i\} \cap S|-1.
\]
It is easy to see that concatenating centred Catalan sets (resp. Motzkin paths) commutes with the map from centred Catalan sets to Motzkin paths, i.e., $\M(S_1 \circ S_2)= \M(S_1)\circ \M(S_2)$. Further every irreducible component of $\M(S)$ corresponds to an irreducible component of $S$ and vice-versa. For an example see Figure \ref{fig: Splittings}.\\

\begin{figure}
 \centering
 \includegraphics[width=.9\textwidth]{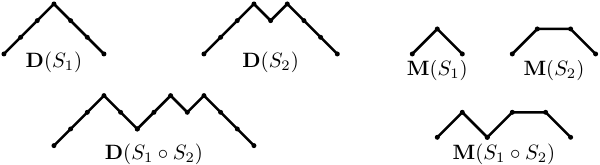}
  \caption{\label{fig: Splittings} The Dyck and Motzkin paths of the centred Catalan set $S=\{-3,-1,0,1,3,4\}$ and its irreducible components $S_1=\{-1,0,1\}$ and $S_2=\{-1,0,1,2\}$.
  }
\end{figure}


\subsection{Alternating sign triangles}
Alternating sign triangles were recently introduced by Ayyer, Behrend and Fischer \cite{AyyerBehrendFischer1611.03823}, and are the latest addition to a family of combinatorial objects with the same enumeration formula.

 \begin{defi}
An \emph{alternating sign triangle (AST)} of order $n$ is a configuration of $n$ centred rows such that the $i$-th row, counted from the bottom, has $2i-1$ elements. The entries are $-1$, $0$, or $1$ such that in all rows and columns the non-zero elements are alternating, all row sums are $1$ and in every column the first non-zero entry from the top is positive.
 \end{defi}

  The following is an example of an AST of order $6$
 \[
\begin{array}{c	c c c c c c c c c c}
0 & 0 & 0& 0 & 0 & 0 & 0 & 0 & 0 & 1 & 0\\
  & 0 & 0 & 1 &0 & 0 & 0 & 0 & 0 &0\\
  &   &   1 & -1 & 0& 0  & 0 & 0 &1\\
  &   &     &  0  & 0 & 1 & 0 & 0\\
  &   &     &     & 1 & -1 & 1\\
  & & & & &1
\end{array}.
\]

 \begin{thm}[{\cite{AyyerBehrendFischer1611.03823}}]
 The number of ASTs of order $n$ is given by
 \[
 \prod_{i=0}^{n-1} \frac{(3i+1)!}{(n+i)!},
 \]
 i.e., ASTs of order $n$ and $n \times n$ ASMs are equinumerous.
 \end{thm}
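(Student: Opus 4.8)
The plan is to translate each AST into a monotone-triangle-type object by passing to cumulative column sums, and then to match the resulting constrained enumeration with the product $\prod_{i=0}^{n-1}\frac{(3i+1)!}{(n+i)!}$, which by the Zeilberger--Kuperberg theorem is the number of $n\times n$ alternating sign matrices. Concretely, index the $2n-1$ columns of an order-$n$ AST $T$ by $-(n-1),\dots,n-1$, write $T_{r,j}$ for the entry in the $r$-th row from the bottom, and for $0\le i\le n$ set
\[
A_i=\bigl\{\,j : \textstyle\sum_{r=n-i+1}^{n}T_{r,j}=1\,\bigr\},
\]
the set of columns whose sum over the top $i$ rows of $T$ equals $1$. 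The column condition (nonzero entries alternate and the topmost is $+1$) forces every such partial sum into $\{0,1\}$, and ``every row sums to $1$'' forces $|A_i|=i$; since $T$ is recovered via $T_{n-i+1,j}=[j\in A_i]-[j\in A_{i-1}]$, the map $T\mapsto(A_1,\dots,A_n)$ is injective. First I would check that the row-alternation condition on $T$ is exactly the interlacing inequalities that make the stack of the $A_i$'s, each written in increasing order, a monotone triangle of size $n$. This identifies ASTs of order $n$ with the monotone triangles of size $n$ whose bottom row is an $n$-subset of $\{-(n-1),\dots,n-1\}$ and which moreover satisfy the \emph{window constraint} $A_{i-1}\,\triangle\,A_i\subseteq\{-(n-i),\dots,n-i\}$ for every $i$ (equivalently, the $(n-i+1)$-st row of $T$ fits into its $2(n-i)+1$ cells; in particular $0\in A_n$, and the admissible bottom rows turn out to be the centred Catalan sets of the later sections). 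Note this is a bijection with a restricted family of monotone triangles, not with ASMs, which is consistent with the ASM--AST bijection problem remaining open.

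The second step is to enumerate this restricted family. The natural tool is Fischer's operator formula for the number of monotone triangles with a prescribed bottom row: the idea is to sum that formula over all candidate bottom rows $(k_1<\dots<k_n)\subseteq\{-(n-1),\dots,n-1\}$ while encoding the window constraint inside the formula, exploiting the standard phenomenon that the operator formula contributes cancelling signed terms when it is evaluated at weakly increasing or out-of-range arguments. If set up correctly this expresses the number of order-$n$ ASTs as a single multisum over an index set of size polynomial in $n$, with the window-violating monotone triangles pairing off.

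The hard part will be evaluating that multisum and showing it equals $\prod_{i=0}^{n-1}\frac{(3i+1)!}{(n+i)!}$; this is of essentially the same difficulty as the ASM theorem itself, so rather than an elementary manipulation I expect to need antisymmetrisation, constant-term extraction, or a reduction to an already established summation identity for the refined monotone-triangle numbers. A parallel attack that sidesteps part of the bookkeeping is to track the refined statistic ``column of the unique $1$ in the top row of $T$'' --- under the bijection above this is exactly the top entry $A_1$ of the associated monotone triangle --- derive a recursion in $n$ for this refined count, and verify it agrees with the recursion and boundary data of the refined ASM numbers $A_{n,k}$; the difficulty there is concentrated in establishing the recursion, but the essential analytic content is unchanged. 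Finally, one could bypass the combinatorics entirely by realising the generating function of ASTs as a six-vertex-model partition function and applying the Izergin--Korepin determinant, trading the above for a separate polynomiality-and-asymptotics argument.
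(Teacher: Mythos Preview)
The paper does not prove this theorem at all: it is quoted verbatim from \cite{AST} as background, with no argument given. So there is nothing in the paper to compare your proposal against.

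As for the proposal itself, it is an outline rather than a proof. The bijection you set up between ASTs and monotone triangles with a window constraint is correct and is indeed the starting point of the actual proof in \cite{AST}. But you explicitly stop at the decisive step: you write that evaluating the resulting multisum ``is of essentially the same difficulty as the ASM theorem itself'' and list several possible attacks (antisymmetrisation, constant-term extraction, matching the refined recursion, six-vertex model) without carrying any of them out. That is precisely where all the content lies; the translation to monotone triangles is routine, and everything afterwards is the theorem. A referee would not accept this as a proof, only as a plan. If you want to complete it, the cleanest route is the one actually taken in \cite{AST}: pass to the six-vertex model on the triangular region, identify the partition function, and reduce to a known determinant evaluation. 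Alternatively, Fischer's operator-formula approach in \cite{Ilse_ASts} for the more general trapezoids specialises to this case.
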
 

 We label the columns of an AST $A$ of order $n$ from left to right with $-n+1, \ldots, n-1$ and the rows from bottom to top with $1, \ldots, n$.

\begin{prop}
\label{prop: AST to cCS}
 Let $A$ be an AST of order $n$ and $\S(A)$ be the set of columns with column sum equal to $1$. Then $\S(A)$ is a centred Catalan set of size $n$. Conversely, for all centred Catalan sets $S$ of size $n$ there exists an AST $A$ of order $n$ with $\S(A)=S$.
\end{prop}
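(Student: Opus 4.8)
The plan is to prove the two directions separately, and the whole argument rests on one elementary remark about a single column of $A$: reading that column from the top, the nonzero entries alternate in sign and the first one is $+1$, so the nonzero subsequence read downwards is $+1,-1,+1,\dots$ and \emph{every partial sum of the column taken from the top equals $0$ or $1$}. In particular every full column sum of $A$ is $0$ or $1$, and since the $n$ row sums total $n$, exactly $n$ of the $2n-1$ columns have sum $1$; hence $\S(A)$ is an $n$-subset of $\{-n+1,\dots,n-1\}$.

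For the inequality $|\S(A)\cap\{-i,\dots,i\}|\ge i+1$ with $0\le i\le n-1$, I would fix $i$ and, for each column $c\in\{-i,\dots,i\}$, write its full column sum as $B_c+U_c$, where $B_c$ is the sum of the entries of column $c$ in the bottom $i+1$ rows and $U_c$ is the sum in the top $n-i-1$ rows. By the remark above $U_c\in\{0,1\}$, and since every full column sum is $0$ or $1$ we get
\[
|\S(A)\cap\{-i,\dots,i\}|=\sum_{c=-i}^{i}(B_c+U_c)=\Bigl(\sum_{c=-i}^{i}B_c\Bigr)+\sum_{c=-i}^{i}U_c .
\]
Now rows $1,\dots,i+1$ occupy precisely the columns $\{-i,\dots,i\}$, so $\sum_{c=-i}^{i}B_c$ is the sum of \emph{all} entries of those rows, namely $i+1$; and $\sum_{c=-i}^{i}U_c\ge 0$. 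Hence the left-hand side is $\ge i+1$, which is exactly the defining condition of a centred Catalan set (for $i=0$ it gives $0\in\S(A)$), so $\S(A)$ is a centred Catalan set of size $n$.

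For the converse I would exhibit an explicit AST. Given a centred Catalan set $S$ of size $n$, list its elements in weakly decreasing order of absolute value as $c^{(1)},\dots,c^{(n)}$ and define $A$ by taking row $i$ (counted from the bottom) to be all zeros except for a single $1$ in column $c^{(n+1-i)}$. Then every column of $A$ has at most one nonzero entry, a $+1$, so the alternation and positivity conditions hold automatically; each row sum is $1$; and $\S(A)=\{c^{(1)},\dots,c^{(n)}\}=S$. The one thing to check is that the $1$ placed in row $i$ lies in that row's range $\{-(i-1),\dots,i-1\}$, i.e. $|c^{(n+1-i)}|\le i-1$; writing $k=n+1-i$ this is the claim $|c^{(k)}|\le n-k$. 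If $m:=|c^{(k)}|\ge 1$, then at least $k$ elements of $S$ have absolute value $\ge m$, so $\#\{c\in S:|c|\ge m\}\ge k$, while on the other hand $\#\{c\in S:|c|\ge m\}=n-|S\cap\{-(m-1),\dots,m-1\}|\le n-m$ by the centred Catalan inequality applied with index $m-1$. Combining these gives $k\le n-m=n-|c^{(k)}|$, as required (the case $m=0$ forces $k=n$ and is immediate).

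I expect the forward direction to be the substantive half, the single delicate point being the splitting of each central column sum into a lower contribution that the row sums pin down exactly and an upper contribution that is merely nonnegative; once the ``$0$ or $1$'' property of top-truncated column sums is in hand, the rest is bookkeeping. The converse should then reduce transparently to the inequality in the definition of a centred Catalan set, so I foresee no real obstacle there.
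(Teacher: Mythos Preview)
Your proof is correct and follows essentially the same approach as the paper. Both directions rest on the same observations: for the forward direction, that partial column sums taken from the top lie in $\{0,1\}$ and that the bottom $i+1$ rows contribute exactly $i+1$ to the total over columns $\{-i,\dots,i\}$; for the converse, that the centred Catalan inequalities let one assign each element of $S$ to a distinct row whose range contains it. Your forward direction is packaged slightly more directly (summing the $\{0,1\}$-valued column sums rather than introducing the auxiliary sets $\S_i(A)$ of columns with positive bottom partial sum), and your converse is more explicit (sorting by absolute value and verifying the bound $|c^{(k)}|\le n-k$, where the paper simply asserts that a labelling with $|s_i|<i$ exists), but these are presentational differences rather than different arguments.
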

\begin{proof}
The definition of an AST implies that the column sums can be either $0$ or $1$. Since all row sums are equal to $1$, the sum over all entries in an AST is equal to $n$. Therefore, exactly $n$ columns have column sum equal to $1$, i.e., the set $\S(A)$ is an $n$-subset of $\{-n+1,\ldots,n-1\}$. Define $\S_i(A)$ to be the set of columns $j$ such that $|j|<i$ and the partial column sum of elements below the $(i+1)$-st row is equal to $1$. If a column $j$ is an element of $\S_i(A)$ for an $i>|j|$, it follows that it has positive column sum. Hence, we have the following relation between $\S_i(A)$ and $\S(A)$
\begin{align*}
\S_{i}(A) &\subseteq \S(A) \cap \{-i+1,-i+2, \ldots,i-1\}.
\end{align*}
Since the partial column sums can only have a value of $1$, $0$, or $-1$, and the sum of the partial column sums of the elements below the $(i+1)$-st row is $i$, we obtain $|\S_i(A)| \geq i$.
Hence we have
\[
i \leq |\S_i(A)| \leq |\S(A) \cap \{-i+1, \ldots, i-1\}|,
\]
which implies the first claim.

Now let $S$ be given. By definition we can choose a sequence $(s_i)_{1 \leq i \leq n}$ such that $S=\{s_i: 1 \leq i\leq n\}$ and $|s_i| < i$. We construct an AST $A$ by setting all entries equal to $0$ except the entry in column $s_i$ of the $i$-th row for $1 \leq i \leq n$, which we set to $1$. Then $A$ is an AST and satisfies $\S(A)=S$.
\end{proof}

The following refinement of ASTs by Motzkin paths is due to Ayyer \cite{Ayyer}.

\begin{cor}
 \label{cor: Motzkin paths and ASTs}
  The map $\M(A):=\M(\S(A))$ is a surjection from ASTs of order $n$ to Motzkin paths of length $n-1$.
\end{cor}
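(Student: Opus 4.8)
The statement is essentially a corollary, so the plan is to exhibit $\M(A)$ as a composition of maps each of which has already been shown to be surjective (or bijective). By definition $\M(A)=\M(\S(A))=\M(\D(\S(A)))$, so $\M$ factors through the chain
\[
A \;\longmapsto\; \S(A) \;\longmapsto\; \D(\S(A)) \;\longmapsto\; \M(\D(\S(A))).
\]
First I would record well-definedness: by Proposition~\ref{prop: AST to cCS} the set $\S(A)$ is a centred Catalan set of size $n$, hence $\D(\S(A))$ is a Dyck path of length $2n$ and $\M(\D(\S(A)))$ is a Motzkin path of length $n-1$; thus $\M$ indeed maps ASTs of order $n$ to Motzkin paths of length $n-1$.

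For surjectivity I would note that each of the three arrows above is onto. The first, $A\mapsto\S(A)$, is surjective onto centred Catalan sets of size $n$ by the second assertion of Proposition~\ref{prop: AST to cCS}. The second, $\D$, is a bijection between centred Catalan sets of size $n$ and Dyck paths of length $2n$. The third, the averaging map $\M$, is a surjection from Dyck paths of length $2n$ onto Motzkin paths of length $n-1$, as recorded in Section~\ref{sec: preliminaries}. A composition of surjections is a surjection, which is the claim.

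If one prefers an argument that does not invoke surjectivity of the averaging map as a black box, a preimage can be written down directly. Given a Motzkin path $M=(m_1,\ldots,m_{n-1})$, put
\[
S:=\{0\}\cup\{\, i : 1\le i\le n-1,\ m_i\neq -1\,\}\cup\{\, -i : 1\le i\le n-1,\ m_i=1\,\}.
\]
Then $S\subseteq\{-(n-1),\ldots,n-1\}$, and for $0\le i\le n-1$ one computes $|S\cap\{-i,\ldots,i\}|=1+\sum_{j=1}^{i}(m_j+1)=i+1+\sum_{j=1}^{i}m_j\ge i+1$, the inequality being exactly the defining property of a Motzkin path (nonnegative partial sums); taking $i=n-1$, where the full sum vanishes, gives $|S|=n$. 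Hence $S$ is a centred Catalan set of size $n$, and by construction $m_i(S)=|\{-i,i\}\cap S|-1=m_i$, i.e.\ $\M(S)=M$. Proposition~\ref{prop: AST to cCS} then yields an AST $A$ of order $n$ with $\S(A)=S$, so that $\M(A)=\M(S)=M$.

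I do not anticipate any genuine obstacle here: the preliminary section has already supplied the bijection $\D$ and the averaging map $\M$, and the only thing to check is that the explicit set $S$ above (or any centred Catalan set realising a prescribed Motzkin path) meets the size and Catalan conditions, which reduces to the partial-sum inequality for Motzkin paths.
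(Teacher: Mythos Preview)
Your argument is correct and matches the paper's intended reasoning: the corollary is stated immediately after Proposition~\ref{prop: AST to cCS} with no separate proof, the understanding being precisely that $\M(A)=\M\circ\D\circ\S(A)$ is a composition of the surjection of Proposition~\ref{prop: AST to cCS}, the bijection $\D$, and the averaging surjection $\M$ recorded in Section~\ref{sec: preliminaries}. Your explicit preimage $S$ is a nice bonus but not needed.
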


\begin{figure}
 \centering
 \includegraphics[scale=1.1]{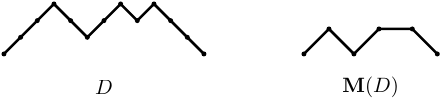}
 \caption{\label{fig: Dyck and Motzkin} A Dyck path and its corresponding Motzkin path.}
\end{figure}

We are interested in the \emph{weight function} $w(S)$ (resp. $w(M)$) of a centred Catalan set $S$ (resp. Motzkin path $M$), which is defined as the number of ASTs $A$ with $\S(A)=S$ (resp. $\M(A)=M$).


\subsection{Alternating sign trapezoids}
Following the definition of ASTs, we introduce a generalisation of ASTs to a trapezoidal shape.

\begin{defi}
Let $n,l$ be positive integers. An $(n,l)$-\emph{AS-trapezoid} is an array of $n$ centred rows such that the $i$-th row from the bottom has $l+2i-1$ entries, which are filled with $-1$, $0$, or $1$, such that the following hold: all row sums are equal to $1$, the column sums are equal to $0$ for the central $l-1$ columns, the non-zero entries in all rows and columns are alternating and in every column the first non-zero entry from the top is positive.
\end{defi}

\emph{Alternating sign trapezoids} were first introduced in \cite{Aigner_AST0} with bases of odd length. The above definition is more general, in particular it allows bases of even length. The term $(n,l)$-AS-trapezoid in \cite{Aigner_AST0} corresponds to the term $(n,2l)$-AS-trapezoid in this article.\\

ASTs of order $n+1$ and $(n,2)$-AS-trapezoids are in bijection by deleting the bottom $1$ of an AST or adding a $1$ at the bottom of an $(n,2)$-AS-trapezoid. We label the rows of an $(n,l)$-AS-trapezoid from bottom to top with $1,\ldots, n$ and the columns from left to right by $-n+1, \ldots, l+n-1$. Let $A$ be an $(n,l)$-AS-trapezoid. We define $\S(A)$ to be the set obtained as follows: First, take the set of columns of $A$ with positive column sum, then subtract $1$ from the columns with non-positive label. Now, subtract $l-1$ from the columns with positive labels, and finally add $0$ to the set. Analogous to Proposition \ref{prop: AST to cCS}, one can prove that $\S(A)$ is a centred Catalan set of size $n+1$.\\

The following is an example of a $(4,6)$-AS-trapezoid $A$ with $\S(A)=\{-2,-1,0,1,3\}$.
\[
\begin{array}{c c c c c c c c c c c c c c}
&0 & 0 & 0 & 0 & 0 & 0 & 0 & 1 & 0 & 0 & 0 & 0 & 0\\
&& 0 & 0 & 0 & 0 & 1 & 0 & -1 & 0 & 0 & 0 & 1\\
&& & 1 & 0 & 0 &-1 & 0 & 1 & 0 & 0 & 0\\
&& & & 1 & 0 & 0 & 0 &-1 & 0 & 1\\
\textit{labels: } & \mathit{-3} & \mathit{-2} & \mathit{-1} & \mathit{0} & \mathit{1} & \mathit{2} & \mathit{3} & \mathit{4} & \mathit{5} & \mathit{6} & \mathit{7} & \mathit{8}& \mathit{9}
\end{array}
\]
Analogously to ASTs, we define $\M(A):=\M(\S(A))$ for an $(n,l)$-AS-trapezoid $A$.
For a centred Catalan set $S$ of size $n+1$, we define $w_{l}(S)$ (resp. $w_{l}(M)$) to be the number of $(n,l)$-trapezoids $A$ with $\S(A)=S$ (resp. $\M(A)=M$).

\begin{rem}
\label{rem: linking ASTs and ASts}
It is easy to see that the bijection between ASTs of order $n+1$ and $(n,2)$-AS-trapezoids commutes with the maps $\S$ and $\M$, which map ASTs and AS-trapezoids to centred Catalan sets or Motzkin paths, respectively.
This implies
\begin{align*}
w_2(S)&=w(S),\\
w_2(M)&=w(M).
\end{align*}
Since our interest in ASTs in this paper is with respect to their centred Catalan set and Motzkin path refinement, we can treat ASTs without loss of generality as a special case of AS-trapezoids.
\end{rem}

By using a computer system to calculate the number of $(n,l)$-AS-trapezoids for $1 \leq n\leq 9$ and treating $l$ as a variable, it is possible to guess the formula that enumerates $(n,l)$-AS-trapezoids. The formula was first conjectured in a more general way by Behrend \cite{ASt-conj} and later independently by the author. Behrend and Fischer \cite{ASt-conj} found a proof using the six-vertex model and Fischer \cite{Fischer1804.08681} additionally found a proof based on her operator formula for monotone triangles.

\begin{thm}[{\cite{ASt-conj,Fischer1804.08681}}]
\label{thm: AS-ts}
The number of $(n,l)$-AS-trapezoids is
\begin{multline}
\label{eq: number of ASts}
2^{\lfloor\frac{n+1}{2}\rfloor \lfloor\frac{n+2}{2}\rfloor-\lfloor\frac{n}{2}\rfloor} \prod_{i=1}^{\lfloor\frac{n+1}{2}\rfloor}\frac{(i-1)!}{(n-i)!}\\
\times \prod_{i \geq 0}
\left(\frac{l}{2}+3i+2\right)_{\lfloor\frac{n-4i-1}{2}\rfloor}
 \left(\frac{l}{2}+3i+2\right)_{\lfloor\frac{n-4i-2}{2}\rfloor}\\ 
 \times \prod_{i \geq 0}
\left(\frac{l}{2}+2\lfloor\frac{n}{2}\rfloor-i+\frac{1}{2}\right)_{\lfloor\frac{n-4i}{2}\rfloor}
 \left(\frac{l}{2}+2\lfloor\frac{n-1}{2}\rfloor-i+\frac{3}{2}\right)_{\lfloor\frac{n-4i-3}{2}\rfloor},
\end{multline}
where $(x)_n$ denotes the Pochhammer symbol $(x)_n:= x(x+1)\ldots (x+n-1)$ for non-negative integers $n$ and $(x)_n:=1$ if $n$ is negative.
\end{thm}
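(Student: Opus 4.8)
The plan is to realise the total number of $(n,l)$-AS-trapezoids as an integrable-lattice-model partition function and then evaluate it in closed form. Following Behrend and Fischer, the cleanest route is via the six-vertex model; a second route, closer in spirit to the rest of this paper, is to evaluate a global generating function for partial monotone triangles ($\st$-trees), or equivalently to sum the refined weights $w_l(S)$ over all centred Catalan sets $S$ using Fischer's operator formula together with the $\st$-tree bijection used for Theorem \ref{thm: Catalan weight is polynomial}.

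For the six-vertex approach, first I would convert an $(n,l)$-AS-trapezoid into an ice-type configuration, mimicking the passage from an ASM to its monotone triangle. One forms, for each row of $A$, the partial column sums taken over the rows weakly above it; the alternation in columns and the ``first nonzero entry from the top is positive'' condition force these partial sums to lie in $\{0,1\}$, the row-sum condition controls how many $1$'s occur in each row, and the vanishing of the $l-1$ central column sums makes those columns transparent. This identifies $(n,l)$-AS-trapezoids with a trapezoidal family of interlacing $0/1$ sequences — a trapezoidal analogue of monotone triangles — in which the parameter $l$ enters only as a uniform horizontal shift of the admissible label range, which is why it ultimately appears through $l/2$ in \eqref{eq: number of ASts}. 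Equivalently one works directly with the $\st$-tree bijection and sums over $S$.

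Next I would attach spectral parameters to the rows and columns and study the corresponding partition function $Z_{n,l}$. Using the Yang--Baxter equation one checks, as in Korepin's analysis of the Izergin--Korepin determinant, that $Z_{n,l}$ is symmetric in the relevant variables, is a polynomial of bounded degree in each of them, satisfies a recursion lowering $n$ to $n-1$ when one spectral parameter is suitably specialised, and has an explicit base case; these four properties determine it as an Izergin--Korepin-type determinant, with an extra factor produced by the $l-1$ transparent central columns (a reflecting/free boundary segment). Specialising all vertex weights to the combinatorial (ice) point removes the weights, leaving a size-$n$ determinant in which $l$ survives only through the shifted boundary labels; what remains is to evaluate this determinant — by an LU-type factorisation or a known Cauchy/Vandermonde-like determinant evaluation — and to reorganise the resulting rising factorials into the shape \eqref{eq: number of ASts}, including the power of $2$ and the four floor-indexed Pochhammer products. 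In the operator-formula variant, the analogous final step is to evaluate the constant term/operator expression (compare Theorem \ref{thm: constant term identity} summed over $S$) in closed form.

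The main obstacle is twofold. First, the genuinely new feature relative to the ASM case is the mixed boundary: the condition that the $l-1$ central columns have zero column sum must be encoded as a boundary condition for which the Yang--Baxter/Korepin recursion still closes, and verifying this closure — and isolating the correct extra determinant factor — is the conceptual heart of the argument. Second, the closed-form evaluation of the size-$n$ determinant carrying the parameter $l$ is a substantial hypergeometric computation, and matching the parity-dependent exponent $2^{\lfloor(n+1)/2\rfloor\lfloor(n+2)/2\rfloor-\lfloor n/2\rfloor}$ and the floor-indexed products demands careful bookkeeping. Useful consistency checks: setting $l=2$ must return the AST count $\prod_{i=0}^{n}\frac{(3i+1)!}{(n+1+i)!}$ by Remark \ref{rem: linking ASTs and ASts}; the degree of \eqref{eq: number of ASts} in $l$ must equal $\sum_S \ar(\M(S))$ by Theorem \ref{thm: Catalan weight is polynomial} and the multiplicativity of Proposition \ref{prop: multiplicity of weights}; and the cases $n\le 9$ can be compared against direct enumeration.
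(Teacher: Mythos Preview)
The paper does not contain its own proof of Theorem~\ref{thm: AS-ts}: the result is quoted from \cite{ASt-conj,Ilse_ASts}, and the surrounding text merely reports that Behrend--Fischer proved it via the six-vertex model and that Fischer gave a second proof via the operator formula for monotone triangles. Your proposal is therefore not competing with anything in the paper; it is an outline of precisely the two external arguments the paper cites, and as such it is consistent with what is known.

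As a sketch it is reasonable, but note one slip in your consistency checks. The total count is $\sum_S w_l(S)$, and by Theorem~\ref{thm: Catalan weight is polynomial} each summand $w_l(S)$ is a polynomial in $l$ of degree $\ar(\M(S))$ with positive leading coefficient. Hence the degree of \eqref{eq: number of ASts} in $l$ equals $\max_S \ar(\M(S))$, not $\sum_S \ar(\M(S))$; the latter would be a sum over Catalan-many terms and is far too large. This does not affect the proof strategy, only the sanity check.
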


Remarkably, the above formula appears in two further places. First, it is identical to the determinant
\begin{equation}
\label{eq: Det Andrews}
\det_{0 \leq i,j \leq n-1} \left( \binom{l+i+j}{j}+q \delta_{i,j} \right)
\end{equation}
for $q=1$. When $q=1$, this determinant appeared in \cite{Andrews79}, where it was used to count descending plane partitions with parts of size less than $n$ by setting $l=2$. In \cite{CiucuEisenkoelblKrattenthalerZare01}, it was shown that for $q$ a sixth root of unity, this determinant gives the weighted enumeration of cyclically symmetric lozenge tilings of a hexagon with side lengths $n,n+l,n,n+l,n,n+l$ with a central triangular hole of side length $l$.
Second, the above formula is the $(-q)^{-n}$-th multiple of
\begin{equation}
\label{eq: Det ASMs}
\det_{0 \leq i,j \leq n-1} \left( \binom{l+i+j}{j}\frac{1-(-q)^{j+k-i}}{1+q}\right),
\end{equation}
where $k=3$ and $q$ is a primitive third root of unity. This determinant appeared first in \cite{Fischer18} in the special case $k=1$ and for $k=1$ and $l=0$, it enumerates the number of ASMs of size $n$. In \cite{Aigner_Det}, it is proven that for $k=1$ and $l=0$ this determinant is the $(q^{-1}+2+q)$-enumeration of ASMs, i.e., a weighted enumeration of ASMs where each ASM is weighted by $(q^{-1}+2+q)$ to the power of the number of $-1$s in the ASM. Evaluating the determinant at a primitive third, fourth, or sixth root of unity yields an alternative proof of the ASM Theorem and the $2$- and $3$-enumeration of ASMs based on the operator formula. We will study possible interpretation of $l$ in \eqref{eq: Det ASMs} and possible connections between \eqref{eq: number of ASts}, \eqref{eq: Det Andrews} and \eqref{eq: Det ASMs} in a forthcoming work.


\section{The structure of AS-trapezoids}
\label{sec: structure of AS-ts}

In the following we prove two technical lemmas. Together they immediately imply Proposition \ref{prop: multiplicity of weights}.

\begin{lem}
\label{lem: structure of ones}
Let $S$ be a centred Catalan set of size $n+1$, $A$ an $(n,l)$-AS-trapezoid with $\S(A)=S$ and $\M(S)=(m_1, \ldots ,m_{n})$. Then the number of $1$s in the $i$-th row of $A$ is at most $1+\sum_{j=1}^i m_j$. Further, for $l \geq 2$ there exists an $(n,l)$-AS-trapezoid such that these bounds are sharp.
\end{lem}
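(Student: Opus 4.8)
The plan is to analyze the column-sum and partial-row structure of an $(n,l)$-AS-trapezoid directly, translating the constraint "the number of $1$'s in row $i$ is at most $1+\sum_{j=1}^i m_j$" into a statement about how many columns can acquire a positive partial column-sum by the time we have summed the bottom $i$ rows. First I would recall, as in the proof of Proposition \ref{prop: AST to cCS}, that for an AS-trapezoid the partial column-sum of the entries in columns lying below (and including) row $i$ takes only the values $0$ or $1$ in the $l-1$ central columns (since those full column-sums are $0$ and the sign-alternation forces partial sums into $\{-1,0,1\}$, with the first nonzero from the top being positive ruling out $-1$ there), and takes values in $\{-1,0,1\}$ in the outer columns. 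Let $c_i$ be the number of columns whose partial column-sum below row $i$ equals $+1$ (equivalently, up to a shift, this is related to $|\S_i(A)|$ in the notation of the earlier proof). Summing all entries in the bottom $i$ rows gives $i$ (each row-sum is $1$), so the partial sums contribute total $i$; hence the number of $+1$'s minus the number of $-1$'s among partial column-sums is $i$, and in particular $c_i \le i + (\text{number of } -1\text{'s})$, but more usefully the number of columns with partial sum $+1$ after processing row $i$ is at most $i$ plus something controlled by the outer columns only.

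Next, the key is to connect $c_i$ to $\sum_{j\le i} m_j$. Recall $m_j(S) = |\{-j,j\}\cap S| - 1 \in \{-1,0,1\}$, and by Proposition \ref{prop: AST to cCS} (and the definition of $\S(A)$ for trapezoids via the shift-by-$1$/shift-by-$(l-1)$ recipe) the set $\S(A)$ records exactly which columns end up with positive total column-sum. The quantity $1 + \sum_{j=1}^i m_j$ equals $|\S(A) \cap \{-i,\dots,i\}|$ by a telescoping of the definition of $m_j$ (since $|\S(A)\cap\{-i,\dots,i\}| - |\S(A)\cap\{-i+1,\dots,i-1\}| = m_i$ and $|\S(A)\cap\{0\}| = 1$). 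So the claim to prove becomes: the number of $1$'s in row $i$ of $A$ is at most $|\S(A)\cap\{-i,\dots,i\}|$, i.e. at most the number of columns (after the label-shift) in the range $[-i,i]$ that eventually have positive column-sum. The mechanism is that a $1$ placed in row $i$ in a column that does NOT lie in the eventual positive-sum set, or lies outside $[-i,i]$, must be "cancelled" later by a $-1$ above it, and one tracks that each such $1$ either sits in a column with eventual positive sum in range, or is matched; a careful accounting of the alternation constraint and of the row-index-vs-column-label geometry (row $i$ from the bottom spans columns $-n+1,\dots,l+n-1$, but the "active" band narrows as we go down) yields the bound. I would set this up as: for each column with partial sum $+1$ after row $i$, at most one $1$ was placed in that column in rows $1,\dots,i$ *net*, and then bound the number of columns that can be in state $+1$ after exactly $i$ rows, in the window $[-i+1,\dots,l+i-1]$, by relating it to $c_{i-1}$ plus the new $1$'s, minus cancellations, and induct on $i$.

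For the sharpness statement with $l\ge 2$: I would construct an explicit $(n,l)$-AS-trapezoid achieving equality in every row. The natural candidate is built greedily row by row from the bottom so that each row places the maximum allowed number of $1$'s and the minimum number of $-1$'s, mirroring the "staircase" constructions for monotone triangles; concretely, when $m_i = 1$ one adds a fresh $1$ in a new outer column, when $m_i = 0$ one shifts a $1$ inward creating a $1,-1$ pair with the row above, and when $m_i = -1$ one merges, always keeping partial column-sums in $\{0,1\}$ centrally. One checks this respects all AS-trapezoid axioms (row-sums $1$, central column-sums $0$, alternation, first nonzero from top positive) and that $\S$ of the result is the prescribed $S$; the condition $l\ge 2$ is exactly what gives enough central columns with column-sum $0$ to absorb the cancellations. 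The main obstacle I anticipate is the bookkeeping in the upper-bound direction: making the induction on $i$ genuinely tight requires carefully distinguishing columns by whether their eventual column-sum is positive and whether their label lies in the shrinking band $[-i,i]$ (after the label shifts defining $\S(A)$), and showing that a $1$ in row $i$ outside the "good" set is always compensated above without over-counting — essentially a discrete flow/matching argument that must be stated cleanly rather than waved at.
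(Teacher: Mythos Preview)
Your telescoping identity is wrong, and the error is not cosmetic. You claim
\[
1+\sum_{j=1}^i m_j \;=\; |\S(A)\cap\{-i,\ldots,i\}|,
\]
but since $m_j=|\{-j,j\}\cap S|-1$ and $0\in S$, one actually gets
\[
|\S(A)\cap\{-i,\ldots,i\}| \;=\; 1+\sum_{j=1}^i(m_j+1) \;=\; 1+i+\sum_{j=1}^i m_j.
\]
So the statement you propose to prove, ``the number of $1$'s in row $i$ is at most $|\S(A)\cap\{-i,\ldots,i\}|$'', is weaker than the lemma by exactly $i$; establishing it would not give the result. Moreover, the rest of your upper-bound plan never becomes concrete: phrases like ``a careful accounting \ldots\ yields the bound'' and ``essentially a discrete flow/matching argument'' describe hopes rather than steps, and the matching you gesture at (pairing $1$'s outside the ``good set'' with later $-1$'s above) does not obviously produce the bound $1+\sum_{j\le i}m_j$ once the reformulation is corrected.

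The paper's argument avoids all of this by working \emph{locally from below} rather than globally via $\S(A)$. Define an \emph{allowed position} for a $1$ in row $i$ to be a column where either the next nonzero entry below is $-1$, or all entries below are $0$ and the column corresponds to an element of $S$. One checks (using that central columns have sum $0$ and that the topmost nonzero in every column is $+1$) that every $1$ in $A$ sits at an allowed position. If $a_i$ is the number of allowed positions in row $i$, then $a_1=1+m_1$, and passing from row $i$ to row $i+1$ destroys one allowed position net (each row has one more $1$ than $-1$) while the two new outer columns contribute $m_{i+1}+1$ new allowed positions; hence $a_{i+1}=a_i+m_{i+1}$, giving $a_i=1+\sum_{j\le i}m_j$ immediately. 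This one-line recursion is the idea your proposal is missing. Your sharpness construction is close in spirit to the paper's: place a $1$ in every allowed position and a $-1$ between consecutive $1$'s, row by row; the paper verifies directly that this produces a valid $(n,l)$-AS-trapezoid for $l\ge 2$.
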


\begin{proof}
We define an \emph{allowed position} for a $1$ in the $i$-th row of $A$ as a position such that the next non-zero entry below is negative or all entries below are $0$ and the label of the column corresponds to an element in $S$. Denote by $a_i$ the number of allowed positions for a $1$ in the $i$-th row.
By definition of $\M(S)$ we have $m_1= |\{-1,1\}\cap S|-1$. This implies that $m_1+1$ entries of the first row are in a column corresponding to an element in $S$, i.e., we obtain $a_1=m_1+1$. Since every $1$ (resp. $-1$) in the $i$-th row cancels out (resp. adds) an allowed position for a $1$ in the $(i+1)$-st row and there is one more $1$ than $-1$ in every row, the number of allowed positions in the central $l+2i-1$ columns of the $(i+1)$-st row is $a_i-1$. There are two new columns in row $i+1$ of which $|S\cap\{-i-1,i+1\}|=m_{i+1}+1$ have a label corresponding to an element in $S$. Hence we obtain $a_{i+1}=a_i+m_{i+1}$ and therefore
\[
 a_{i}=a_1+\sum_{j=2}^{i} m_i=1+\sum_{j=1}^{i} m_i.
\]
We construct an $(n,l)$-AS-trapezoid $A^\prime$ with $\S(A^\prime)=S$ and a maximal number of $1$s in a recursive manner. Place a $1$ in all allowed positions in the $i$-th row and put a $-1$ between the $1$ entries. This implies that in all columns the non-zero entries alternate. Since the allowed positions are either the leftmost or rightmost positions of a row or above a $-1$ from the row before, two allowed positions are by induction not direct neighbours. Therefore, the non-zero entries in each row are alternating. By the above formula, there is only one allowed position in the top row. If there exists a column in $A^\prime$ with a $-1$ as first non-zero entry from the top there would be a second allowed position in the top row, hence this cannot happen. An allowed position can appear in the central $l-1$ columns only above a $-1$. Hence the column sum is zero for the central $l-1$ columns and the resulting array is an $(n,l)$-AS-trapezoid. 
\end{proof}

\begin{lem}
\label{lem: splitting lemma}
Let $S_1, S_2$ be centred Catalan sets, $A_1$ an $(|S_1|-1,l)$-AS-trapezoid and $A_2$ an $(|S_2|-1,l+2|S_1|-2)$-AS-trapezoid with $\S(A_i)=S_i$ for $i=1,2$. By placing $A_2$ centred above $A_1$, we obtain an $(|S_1 \circ S_2|-1,l)$-AS-trapezoid with centred Catalan set $S_1 \circ S_2$. Moreover, every $(|S_1\circ S_2|-1,l)$-AS-trapezoid $A$ with $\S(A)=S_1 \circ S_2$ is of the above form.
\end{lem}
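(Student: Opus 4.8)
The plan is to prove the two directions separately. For the "if" direction, suppose $A_1$ is an $(|S_1|-1,l)$-AS-trapezoid with $\S(A_1)=S_1$ and $A_2$ is an $(|S_2|-1,l+2|S_1|-2)$-AS-trapezoid with $\S(A_2)=S_2$. Write $n_1=|S_1|-1$ and $n_2=|S_2|-1$. The top row of $A_1$ has $l+2n_1-1$ entries, which is exactly the length of the bottom row of $A_2$; so stacking $A_2$ centred on top of $A_1$ produces an array of $n_1+n_2$ centred rows whose $i$-th row has the correct length $l+2i-1$. First I would check that the resulting array $A$ is an AS-trapezoid: row sums are inherited from the two pieces; the alternation condition within rows is inherited; the alternation condition down columns and the "first non-zero entry from the top is positive" condition need to be checked only at the interface, and here the point is that in $A_2$ each column already has its first non-zero entry (from the top) positive, so adding more rows of $A_1$ below cannot disturb this, and alternation across the seam is automatic once we know the top row of $A_1$ interacts correctly with the bottom row of $A_2$ — but since $A_2$ is itself an AS-trapezoid, its bottom row's non-zero entries are positioned so that alternation only needs the sign pattern below, which is exactly governed by the column sums of $A_1$. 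The central-column condition for $A$ (column sums $0$ for the central $l-1$ columns) follows because those columns lie inside the central band of both $A_1$ and $A_2$. Then I would compute $\S(A)$: the positive-column-sum columns of $A$ are the union of those coming from the $A_1$-part and those from the $A_2$-part; after applying the relabelling in the definition of $\S$, the $A_1$-contribution gives $S_1$, the $A_2$-contribution gives $\s_{n_1}$ applied to the non-zero part of $S_2$ (the shift by $l+2n_1-2$ in the column labels of $A_2$ translates precisely into the dilation $\s_{|S_1|-1}$ on positive/negative entries after the $-1$ and $-(l-1)$ corrections), and together with the forced $0$ this is exactly $S_1\circ S_2 = S_1\cup\s_{|S_1|-1}(S_2)$.

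For the "only if" direction — the substantive part — let $A$ be an $(n_1+n_2,l)$-AS-trapezoid with $\S(A)=S_1\circ S_2$. I would set $A_1$ to be the bottom $n_1$ rows of $A$ and $A_2$ to be the top $n_2$ rows. The claim to prove is that each piece is itself a legitimate AS-trapezoid and has the prescribed centred Catalan set. The inner rows, row-sum, and within-row alternation conditions are automatic for both pieces. For $A_1$ one must check that along every column the first non-zero entry from the top is positive and that the central $l-1$ columns of $A_1$ have column sum $0$; for $A_2$ one must check the same with central band $l+2n_1-1$ columns, and in addition that the columns of $A_2$ interact correctly. The key structural input is Lemma~\ref{lem: structure of ones} together with the computation of allowed positions in its proof: in $A$, the number of allowed positions for a $1$ in row $n_1$ equals $1+\sum_{j=1}^{n_1} m_j$, where $(m_1,\ldots)=\M(S_1\circ S_2)=\M(S_1)\circ\M(S_2)$, so $\sum_{j=1}^{n_1} m_j$ is the height of the Motzkin path $\M(S_1)$ at its right endpoint, namely $0$. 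Hence there is exactly one allowed position in row $n_1$, forcing the partial column sums of $A$ below row $n_1{+}1$ to all equal $0$ except one column where it equals $1$ — i.e. the sub-array $A_1$ has *all* column sums $0$ except a single $+1$, which is precisely the row-sum-like constraint that makes $A_1$ (after noting its top row already has an isolated $1$ consistent with this) an honest AS-trapezoid; and it simultaneously shows that the remaining rows $A_2$ see below them exactly the column-sum profile (a single $+1$) that an AS-trapezoid must present at its bottom. From this, the "first non-zero from top positive" and "central columns sum to zero" conditions for both pieces follow, and tracking which column carries the $+1$ together with the relabelling identifies $\S(A_1)=S_1$ and $\S(A_2)=S_2$.

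The main obstacle is the bookkeeping in the "only if" direction: one must argue that the single-allowed-position fact at level $n_1$ not only pins down $A_1$'s column sums but also certifies that $A_2$, when read on its own with the shifted base length $l+2n_1-1$, satisfies the AS-trapezoid axioms — in particular that the "first non-zero from the top is positive" condition, which for $A_2$ alone refers to entries of $A_2$ only, is not secretly violated by a column whose sign in $A$ was rescued by an entry sitting in $A_1$. This cannot happen precisely because the partial column sums entering the bottom of $A_2$ are non-negative (in fact $0$ or the single $+1$), so any column of $A_2$ whose topmost non-zero entry were $-1$ would drive a partial sum to $-1$ somewhere in $A_2$, contradicting the allowed-position/alternation analysis; making this last point airtight is where I expect the real work to be, and it is essentially a re-run of the argument in the proof of Lemma~\ref{lem: structure of ones} applied to the truncated array. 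Once that is in place, the identification of the centred Catalan sets is a routine translation of column labels through the $-1$ and $-(l-1)$ shifts and the dilation $\s_{|S_1|-1}$.
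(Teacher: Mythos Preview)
Your approach is the same as the paper's: split $A$ into its bottom $n_1$ rows and top $n_2$ rows, and use Lemma~\ref{lem: structure of ones} to deduce that there is exactly one allowed position for a $1$ in the top row of $A_1$, from which the AS-trapezoid axioms for the two pieces follow. Two small corrections, though. First, an arithmetic slip: the bottom row of $A_2$ has $l+2n_1+1$ entries, two more than the top row of $A_1$ (not equal to it), consistent with $A_2$ being an $(n_2,l+2n_1)$-AS-trapezoid as in the statement; the central band of $A_2$ thus has $l+2n_1-1$ columns, not $l+2n_1-3$. Second, and more importantly, you have misplaced the ``main obstacle'': the condition that in every column of $A_2$ the first non-zero entry from the top is positive is \emph{automatic}, because $A_2$ comprises the top rows of $A$ and so inherits this directly from $A$. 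The genuine issue is for $A_1$, where a column could in principle start (from the top of $A_1$) with a $-1$ that is rescued in $A$ by a $+1$ higher up in $A_2$. The paper dispatches this in one line: such a column would create a second allowed position in the top row of $A_1$, contradicting the count from Lemma~\ref{lem: structure of ones}. Once that is in hand, the central-column conditions and the identification $\S(A_i)=S_i$ go through exactly as you outline.
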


\begin{proof}
It follows immediately from the definitions of AS-trapezoids that this construction yields an $(|S_1|+|S_2|-2,l)$-AS-trapezoid with centred Catalan set $S_1 \circ S_2$.

On the other hand, let $A$ be an $(|S_1 \circ S_2|-1,l)$-AS-trapezoid with $\S(A)=S_1 \circ S_2$. We split $A$ into a bottom part $A_1$ consisting of the first $|S_1|-1$ rows from the bottom and a top part $A_2$ consisting of the remaining rows.
By Lemma \ref{lem: structure of ones}, there is only one allowed position for a $1$ in the top row of $A_1$.
If $A_1$ has a column whose first non-zero entry from the top is negative, there would be a second allowed position for a $1$ in the top row of $A_1$, hence this cannot happen. The central $l-1$ columns of $A$ have column sum zero and the first non-zero entry from top is positive, hence the partial column sum of the top $|S_2|-1$ rows is either $0$ or $1$. Since the first non-zero entry of every column of $A_1$ is also positive, the column sums of the central $l-1$ columns of $A_1$ also have column sum $0$ or $1$. Together with the above, this implies that the central $l-1$ columns of $A_1$ have column sum zero and therefore $A_1$ is an $(|S_1|-1,l)$-AS-trapezoid with $\S(A_1)=S_1$.
One of the central $l+2|S_1|-3$ columns of $A$ has a positive column sum if its column label corresponds to an element in $S_1$, which implies that this column of $A_1$ has a positive column sum. Therefore, the column sums of the central $l+2|S_1|-3$ columns of $A_2$ are zero, which implies that $A_2$ is an $(|S_2|-1,l+2|S_1|-2)$-AS-trapezoid.
\end{proof}

\begin{proof}[{Proof of Proposition \ref{prop: multiplicity of weights}}]
The theorem is a direct consequence of the above lemma.
\end{proof}


\section{A refined enumeration of AS-trapezoids}
\label{sec: refined enum of AS-ts}

\subsection{A different perspective on AS-trapezoids}

The aim of this section is to prove that the weight function $w_l(S)$ is a polynomial in $l$. We will need the following definition due to Fischer \cite{Fischer11,Fischer1804.03630}.

\begin{defi}
 Let $1 \leq u<v \leq n$, $\mathbf{s}=(s_1,\ldots,s_u)$ be a weakly decreasing sequence of non-negative integers, $\mathbf{t}=(t_{v},\ldots,t_n)$ a weakly increasing sequence of non-negative integers and $\mathbf{k}=(k_1 ,\ldots,k_n)$ an increasing sequence of integers.
An $\st$-tree is an array of integers of the following shape satisfying the properties listed below. We obtain the shape of an $\st$-tree by starting with a triangular array of $n$ centred rows where the $i$-th row from top has $i$ entries.  Then we delete the  bottom $s_i$ elements in the $i$-th north-east diagonal for  $1 \leq i \leq u$ and the bottom $t_j$ elements in the $j$-th south-east diagonal for $v \leq j \leq n$, see Figure \ref{fig: s,t_tree}. 
 \begin{itemize}
  \item The  entries are weakly increasing in the north-east and south-east directions, and strictly increasing in the east direction.
  \item For $1 \leq i \leq u$, the bottom entry of the $i$-th north-east diagonal is $k_i$.
  \item For $v \leq i \leq n$ the bottom entry of the $i$-th south-east diagonal is $k_i$.
  \item The entries in the bottom row are $k_{u+1},\ldots, k_{v-1}$.
 \end{itemize} 
\begin{figure}[h!]
 \centering
 \includegraphics{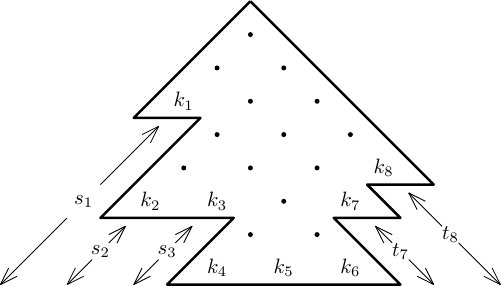}
  \caption{\label{fig: s,t_tree} Schematic diagram of an $\st$-tree.
  }
\end{figure}
\end{defi}

Let $f$ be a function in $x$. We define the following:
\begin{align*}
E_x (f)(x):=& f(x+1) \qquad &\textit{shift operator},\\
\overDelta{x}:=&E_x -\Id \qquad &\textit{forward difference},\\
\underDelta{x}:=& \Id - E_x^{-1} &\textit{backward difference}.
\end{align*}

\begin{thm}[{\cite[Section 4]{Fischer11}}]
\label{thm: s_t_trees}
 Set 
 \begin{equation}
  M_n(x_1,\ldots,x_n):= \prod_{1 \leq p < q \leq n}(\Id+ 
  \overDelta{x_p}\overDelta{x_q}+\overDelta{x_q})
  \prod_{1 \leq p < q \leq n} \frac{x_q-x_p}{q-p}.
 \end{equation}
 The number of $\st$-trees with bottom entries $\mathbf{k}=(k_1 ,\ldots,k_n)$ is given by
 \begin{equation}
  \label{eqn: s_t_trees}
  \left. \left(\prod_{i=1}^{u}(-\overDelta{x_i})^{s_i}
  \prod_{i=v}^n \underDelta{x_{i}}^{t_{i}}\right)
 M_n(x_1,\ldots,x_n)\right|_{\mathbf{x}=\mathbf{k}}.
 \end{equation}
\end{thm}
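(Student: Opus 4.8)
The plan is to prove Theorem~\ref{thm: s_t_trees} by induction on the number of deleted entries $\sum_{i=1}^u s_i + \sum_{i=v}^n t_i$, building up the $\st$-tree one deleted entry at a time starting from an ordinary monotone-triangle-like object (the case $\mathbf{s}=\mathbf{t}=\mathbf{0}$, where an $\st$-tree with bottom row $\mathbf{k}$ is just a Gelfand--Tsetlin-type array whose count is given by $M_n(x_1,\ldots,x_n)|_{\mathbf{x}=\mathbf{k}}$ via the classical Lindstr\"om--Gessel--Viennot / Weyl-dimension evaluation). First I would establish this base case: with no deletions, the entries weakly increase NE and SE and strictly increase E, the bottom row is fixed to $\mathbf{k}$, and there are $n$ diagonals meeting at the apex; the number of such arrays is the number of sequences of interlacing rows, which factors as the product formula $\prod_{p<q}\frac{k_q-k_p}{q-p}$, and one checks that applying the operator product $\prod_{p<q}(\Id+\overDelta{x_p}\overDelta{x_q}+\overDelta{x_q})$ to $\prod_{p<q}\frac{x_q-x_p}{q-p}$ and then specialising reproduces exactly this count — this is the content of the cited reference's monotone triangle operator formula, so I would invoke it directly rather than reprove it.

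Next I would set up the inductive step via a recursion on the trees themselves. Suppose we want to delete one more entry, say increasing $s_u$ by $1$ (the $t$-side is symmetric). An $\st$-tree with the bottom $s_u+1$ entries of the $u$-th NE diagonal removed and bottom entry $k_u$ on that diagonal can be obtained from one with only $s_u$ entries removed by summing over the value $y$ of the entry that now sits just above $k_u$ on that diagonal: the constraints say $y$ is weakly above-and-to-the-right-compatible with its neighbours but $k_u$ itself is unconstrained relative to its right neighbour. Translating this into the operator language: passing from the bottom entry $k_u$ being a genuine tree-entry to being a ``free'' bottom entry below $s_u$ deletions corresponds precisely to applying $-\overDelta{x_u}$ once more, because summing a quantity over $k_u \le y \le (\text{something})$ and then telescoping is exactly what $-\overDelta{x_u}$ (equivalently $\Id - E_{x_u}$, up to sign) records when acting on the count as a function of $x_u$ before specialisation. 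So the key identity to verify in the inductive step is that ``adding one deletion at position $u$'' on the combinatorial side equals ``applying one factor of $-\overDelta{x_i}$'' on the operator side, uniformly in the already-present operators.

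The main obstacle I expect is bookkeeping the interaction between the NE-side and SE-side deletions and making sure the operators $-\overDelta{x_i}^{s_i}$ and $\underDelta{x_j}^{t_j}$ commute appropriately with $M_n$ and with each other in the order claimed — in particular that the specialisation $|_{\mathbf{x}=\mathbf{k}}$ can be performed only at the very end, and that the ``does not have to be less/greater than its neighbour'' clauses for the bottom entries $k_i$, $1\le i\le u$ and $v\le i\le n$, are correctly encoded by leaving those factors acting freely rather than being absorbed into summation bounds. Concretely, one must check that when the combinatorial sum defining ``one more deletion'' is performed, the resulting telescoping sum has its boundary terms matching the action of $\overDelta{}$ on $M_n$ evaluated at the shifted argument, and that no spurious boundary contributions appear because the relevant neighbour entry is already free. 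I would handle this by writing, for fixed bottom row, the number of $\st$-trees as an explicit iterated sum over diagonal entries and peeling off one diagonal entry at a time from the outside in, matching each peel to one operator factor; the apex and the already-deleted portions go along for the ride unchanged. Once the bijective/recursive dictionary ``one deleted entry $\leftrightarrow$ one difference operator'' is set up cleanly and the base case is in hand, formula~\eqref{eqn: s_t_trees} follows by iterating, and the proof reduces to the routine but careful verification of the single-step identity.
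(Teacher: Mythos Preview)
The paper does not prove Theorem~\ref{thm: s_t_trees}; it is quoted from Fischer's earlier work \cite[Section~4]{s_t_trees} and used throughout as a black box. There is therefore no proof in this paper against which to compare your proposal.

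Your outline is nonetheless essentially the argument of the cited reference: take Fischer's operator formula for monotone triangles as the base case and show inductively that each additional deletion on an extremal diagonal corresponds to one application of $-\overDelta{}$ (on the left) or $\underDelta{}$ (on the right). Two corrections to your base case. First, the number of monotone triangles with bottom row $\mathbf{k}$ is \emph{not} the bare product $\prod_{p<q}\frac{k_q-k_p}{q-p}$; that product counts ordinary Gelfand--Tsetlin patterns under a different interlacing condition. The monotone-triangle count is $M_n(\mathbf{k})$, i.e.\ the operator product applied to that polynomial, a strictly larger quantity in general --- and this equality is precisely the nontrivial theorem you (rightly) say you would invoke rather than reprove. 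Second, even with $\mathbf{s}=\mathbf{t}=\mathbf{0}$ an $\st$-tree is not literally a monotone triangle, since the designated bottom entries $k_1,\ldots,k_u$ and $k_v,\ldots,k_n$ are already exempt from the strict row-increase condition; one also needs the fact that the polynomial $M_n$ continues to enumerate these relaxed arrays off the chamber $k_1<\cdots<k_n$. With those two points straightened out, your ``one deletion $\leftrightarrow$ one difference operator'' dictionary and your identification of the boundary-term bookkeeping as the only real work match the original argument.
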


\begin{rem}
The above theorem is stated in \cite{Fischer11} in the more general setting where $\mathbf{k}$ is a weakly increasing sequence. The notion of $\st$-trees can be generalised to this setting using the notion of regular entries: An entry of an $\st$-tree is called \emph{regular} iff it has a south-east and a south-west neighbour. Then, the condition that the rows of an $\st$-tree are strictly increasing is replaced by the condition that two adjacent regular entries in the same row must be different.
\end{rem}

Let $S=\{s_1, \ldots, s_u,0,s_{u+1}, \ldots,s_{n}\}$ be a centred Catalan set of size $n+1$, $s_1 < \ldots <s_u \leq -1$ and $1 \leq s_{u+1}< \ldots < s_n$. Set $\mathbf{s}=(-s_1-1,\ldots, -s_{u}-1), \mathbf{t}=(s_{u+1}-1, \ldots,s_n-1)$ and $\mathbf{k}=(s_1+1,\ldots,s_u+1,l+s_{u+1}-1,\ldots,l+s_n-1)$.
The following algorithm is a bijection between $(n,l)$-AS-trapezoids $A$ with $\S(A)=S$ and $\st$-trees with bottom entries $\mathbf{k}$, for an example see Figure \ref{fig: ASt and (S,l)-trees}.
First we construct a triangular array $T_A$. We fill the $i$-th row from the bottom of $T_A$ with the column labels of $A$, for which the first non-zero entry above the $(i-1)$-st row is positive. In so doing, we write the numbers in an increasing order from left to right. The bottom row of $T_A$ is $\mathbf{k}$. Since there is one more $1$ than $-1$ in every row of the trapezoid, the number of entries in a row of $T_A$ is one less than the number of entries in the row below. Furthermore, it is easy to see that the entries of $T_A$ are weakly increasing in north-east and south-east direction.
For $1 \leq i \leq u$, the column sum of the $(s_i+1)$-st column in $A$ is equal to $1$. Since the first entry of $A$ in this column is in row $(-s_i)$, the first $(-s_i)$ entries of the $i$-th north-east diagonal of $T_A$ are equal to $s_i+1$. Hence we can delete $(-s_i-1)$ of these entries without losing information. Similarly, for $u+1 \leq i \leq n$ the first $s_i$ entries of the $i$-th south-east diagonal of $T_A$ will be $(l+s_i-1)$ and we can delete $(s_i-1)$ of them. The resulting array $T_A$ is an $\st$-tree with bottom entries $\mathbf{k}$.
Then, it is not difficult to see that every such $\st$-tree with bottom entries $\mathbf{k}$ is of the form $T_A$.
This proves the following proposition.

\begin{figure}
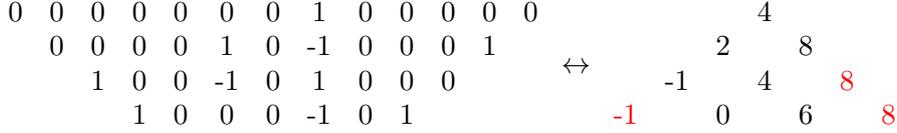

\begin{tabular}{c c c c c c c c c c c c c}
0 & 0 & 0 & 0 & 0 & 0 & 0 & 1 & 0 & 0 & 0 & 0 & 0\\
& 0 & 0 & 0 & 0 & 1 & 0 & -1 & 0 & 0 & 0 & 1\\
& & 1 & 0 & 0 &-1 & 0 & 1 & 0 & 0 & 0\\
& & & 1 & 0 & 0 & 0 &-1 & 0 & 1
\end{tabular}
$\leftrightarrow$
\begin{tabular}{c c c c c c c}
&&& 4\\
 && 2 && 8\\
 & -1 && 4 && \red{8}\\
\red{-1} && 0 && 6 && \red{8} 
\end{tabular}
\caption{\label{fig: ASt and (S,l)-trees} On the left is a $(4,6)$-AS-trapezoid and on the right the corresponding $\st$-tree. The red entries are not part of the $\st$-tree, but appear in its construction.}
\end{figure}

\begin{prop}
\label{prop: bij s,t-trees}
 Let $S$ be a centred Catalan set of size $n+1$. The map $A \mapsto T_A$ from $(n,l)$-AS-trapezoids with associated centred Catalan set $S$ to $(\mathbf{s},\mathbf{t})$-trees with bottom entries $\mathbf{k}$ described above is a bijection.
\end{prop}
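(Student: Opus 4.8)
The plan is to verify carefully that the map $A \mapsto T_A$ described above is well-defined (lands in $\st$-trees) and that it is a bijection by producing an explicit inverse. The irreducibility hypothesis on $S$ will be essential: the construction of $\mathbf{s},\mathbf{t},\mathbf{k}$ assumes $s_1 < \dots < s_u \le -1 < 1 \le s_{u+1} < \dots < s_n$ with $S=\{s_1,\dots,s_u,0,s_{u+1},\dots,s_n\}$, and only for irreducible $S$ of size at least $3$ do both $s_1$ and $s_n$ lie strictly in the interior, making the ``deleted diagonals'' of the $\st$-tree genuinely nontrivial and the bottom row $k_{u+1},\dots,k_{v-1}$ nonempty; I would first record this so the definitions of $\st$-tree bottom entries actually apply.

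First I would check well-definedness: given an $(n,l)$-AS-trapezoid $A$ with $\S(A)=S$, form $T_A$ row by row by listing, in increasing order, the labels of columns whose first nonzero entry strictly above row $i-1$ is a $+1$. Here I would use Lemma~\ref{lem: structure of ones} and the column-alternation/row-sum axioms: the count in row $i$ of $T_A$ is one less than in row $i-1$ (one more $1$ than $-1$ per row), the entries are strictly increasing along each row (distinct column labels) and weakly increasing in the north-east and south-east diagonal directions (a $+1$ ``above row $i-1$'' in column $c$ persists until cancelled by a $-1$, which by alternation shifts the nearest surviving $+1$ to an adjacent column). Then I would show that the columns $s_i+1$ (for $i\le u$) and $l+s_i-1$ (for $i\ge u+2$) each have column-sum $1$ and hence their first nonzero entry is a $+1$ sitting at the very bottom, so the bottom $-s_i$ (resp.\ $s_i$) entries of the corresponding extreme diagonal are all equal to that constant; deleting $-s_i-1$ (resp.\ $s_i-1$) of them loses no information and yields exactly the deleted-diagonal shape of an $\st$-tree with the prescribed $\mathbf k$. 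The bottom row being $\mathbf k$ is immediate from the $\S$-construction for AS-trapezoids (columns with positive column-sum, shifted by $1$ or $l-1$).

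The core of the argument is the inverse map. Given an $\st$-tree $T$ with bottom entries $\mathbf k$, I would first restore the deleted entries (they are forced: copies of $k_i$ on the extreme diagonals), obtaining a full triangular array $\widetilde T$ weakly increasing in both diagonal directions and strictly increasing along rows. Reading $\widetilde T$ from top down, the ``new'' labels appearing from row $i{+}1$ to row $i$ (entries in row $i$ not forced by row $i{+}1$) tell us where to put $+1$'s in row $i$ of the trapezoid, and labels that disappear tell us where to put $-1$'s; the alternation is automatic because between two consecutive surviving $+1$-columns exactly one $-1$ must be inserted, and the weak monotonicity of $\widetilde T$ guarantees these positions interleave correctly, so each row is a valid $\{-1,0,1\}$ row with row-sum $1$. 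One then checks the column axioms: first nonzero from top is positive (this is exactly the defining property of what it means for a label to ``enter'' $T_A$), alternation down columns, and that the central $l-1$ columns have column-sum $0$ — the latter because a central column enters $T$ only above a $-1$, forcing balance, which is precisely the reasoning already used in Lemma~\ref{lem: structure of ones}. I would then verify that these two maps are mutually inverse by inspection on rows.

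The main obstacle I expect is the bookkeeping around the two coordinate shifts (negative labels shifted by $1$, positive labels shifted by $l-1$) together with the $0 \in S$ gap: one must confirm that the bottom row $\mathbf k$, the extreme-diagonal base entries $k_i$, and the deletion counts $s_i$, $-s_i-1$, $t_i$ all fit together consistently, in particular that no off-by-one error creeps in at the transition between the $u$ left diagonals, the $v-1-u$ bottom-row entries, and the $n-v+1$ right diagonals, and that irreducibility genuinely forces $u\ge 1$, $v\le n$ with the bottom row nonempty so that Theorem~\ref{thm: s_t_trees} will later be applicable. Everything else is a routine translation between the local alternation rules of AS-trapezoids and the local monotonicity rules of $\st$-trees, as illustrated by Figure~\ref{fig: ASt and (S,l)-trees}.
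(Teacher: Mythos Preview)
Your proposal is correct and follows the same approach as the paper. The paper's own ``proof'' is simply the paragraph preceding the proposition: it describes the forward map $A\mapsto T_A$, observes that the extreme diagonal entries are forced (equal to $s_i+1$ or $l+s_i-1$) and can be deleted, and then dismisses the inverse with ``it is not difficult to see that every such $\st$-tree with bottom entries $\mathbf{k}$ is of the form $T_A$.'' You are supplying exactly the details the paper omits --- the explicit inverse via row-to-row differences of the restored array $\widetilde T$, the interleaving argument for row alternation, and the check that central columns have column-sum $0$ because they are absent from the bottom row $\mathbf{k}$. Your discussion of irreducibility is slightly more than the paper spells out (the paper simply assumes the setup is non-degenerate), but your observation that irreducibility of $S$ with $|S|\ge 3$ forces $\{-1,1\}\subset S$, hence $1\le u\le n-1$, is the right way to justify that the $\st$-tree shape is well-formed.
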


An $\st$-tree with bottom entries $\mathbf{k}$ corresponding to an $(n,l)$-AS-trapezoids with associated centred Catalan set $S$ will be be called an \emph{$(S,l)$-tree}.

\subsection{Equivalence classes of $(S,l)$-trees}
In the following, we introduce an equivalence relation for AS-trapezoids and translate it to an equivalence relation for $(S,l)$-trees. The equivalence relation is more intuitively accessible in the AS-trapezoid picture, however, it is more convenient in the  $(S,l)$-tree setting.\\

\begin{defi}
Let $n,l$ be positive integers and $A,B$ two $(n,l)$-AS-trapezoids. For the central $l-1$ columns of $A$ (resp. $B$), denote from left to right the columns which have non-zero entries by $c_1(A), \ldots, c_f(A)$ (resp. $c_1(B), \ldots,$ $c_g(B))$. We say that $A$ equivalent to $B$ iff the following holds.
\begin{itemize}
\item The columns with labels less than or equal to $0$ are identical for $A$ and $B$.
\item The columns with labels greater than or equal to $l$ are identical for $A$ and $B$.
\item For all $1 \leq i \leq f$, $f=g$ and $c_i(A)=c_i(B)$.
\end{itemize}
We call the columns $c_1(A), \ldots, c_f(A)$ the free columns of $A$.
\end{defi}

\begin{ex}
\label{ex: equi of AS-trapezoids}
Let $A,B,C$ be the $(4,4)$-AS-trapezoids defined as below.
The columns with labels less than or equal to $0$ and the columns with labels greater than or equal to $l=4$ are identical for $A,B$ and $C$. The free columns are marked in red and green. 
\begin{align*}
A&= \, \begin{array}{c c c c c c c c c c c}
 0 & 0 & 0 & 0 & 0 & \red{0} & \green{1} & 0 & 0 & 0 & 0\\
  & 0 & 0 & 0 & 0 & \red{1} & \green{0} & 0 & 0 & 0\\
     && 1 & 0 & 0 & \red{0} & \green{-1}& 0 & 1\\
        &&& 1 & 0 &\red{-1} & \green{0} & 1
\end{array},\\
\\
B&= \, \begin{array}{c c c c c c c c c c c}
 0 & 0 & 0 & 0 & 0 & \red{1} & \green{0} & 0 & 0 & 0 & 0\\
  & 0 & 0 & 0 & 0 & \red{0} & \green{1} & 0 & 0 & 0\\
     && 1 & 0 & 0 & \red{-1} & \green{0}& 0 & 1\\
        &&& 1 & 0 &\red{0} & \green{-1} & 1
\end{array},\\
\\
C&= \, \begin{array}{c c c c c c c c c c c}
 0 & 0 & 0 & 0 & \red{0} & 0  & \green{1} & 0 & 0 & 0 & 0\\
  & 0 & 0 & 0 & \red{1} & 0  & \green{0} & 0 & 0 & 0\\
     && 1 & 0 & \red{0} & 0  & \green{-1}& 0 & 1\\
        &&& 1  &\red{-1} & 0  & \green{0} & 1
\end{array}.
\end{align*}
The free columns of $A$ and $C$ are
\[
c_1=\begin{matrix}
0 \\ 1 \\ 0 \\ -1
\end{matrix}, \qquad
c_2= \begin{matrix}
1 \\ 0 \\ -1 \\ 0
\end{matrix},
\]
hence $A$ and $C$ are equivalent.
The free columns of $B$ are $c_2,c_1$. The order of these columns is the opposite of the free columns of $A$, so the AS-trapezoids $A$ and $B$ are not equivalent.
\end{ex}

Let $A,B$ be $(n,l)$-AS-trapezoids. If $A$ and $B$ are equivalent, then $\S(A)=\S(B)$. This follows because the centred Catalan set $\S(A)$ depends only on the columns of $A$ with labels less than or equal to $0$ and columns with labels greater than or equal $l$, which are by definition fixed by the equivalence relation. Hence the equivalence relation on $(n,l)$-AS-trapezoids induces an equivalence relation for $(S,l)$-trees, which is given as follows.

\begin{defi}
Let $T=(T_{i,j}),T^\prime=(T^\prime_{i,j})$ be two $(S,l)$-trees. We call $T$ and $T^\prime$ \emph{equivalent} iff the following holds.
\begin{enumerate}
 \item For all $i,i^\prime,j,j^\prime$, $T_{i,j}< T_{i^\prime,j^\prime}$ holds if and only if $T^\prime_{i,j}< T^\prime_{i^\prime,j^\prime}$.
 \item For all $i,j$, if $T_{i,j} \leq 0$ or $T_{i,j} \geq l$, or $T^\prime_{i,j} \leq 0$ or $T^\prime_{i,j} \geq l$, then $T_{i,j}=T^\prime_{i,j}$.
\end{enumerate}
We define the \emph{number of free columns} of an $(S,l)$-tree $T$ as $|\{T_{i,j}: 0 < T_{i,j} <l\}|$. It follows from the definition that the number of free columns is constant on equivalence classes.
\end{defi}

\begin{ex}
\label{ex: equi-classes of trees}
Let $A$ be the first $(4,4)$-AS-trapezoid in Example \ref{ex: equi of AS-trapezoids}. The $(\{-2,-1,0,1,2\},4)$-tree $T_A$ corresponding to $A$ and the equivalence class $\overline{T_A}$ of $T_A$ are, respectively:
\[
T_A=\begin{array}{c c c c c}
&& 3 \\
& 2 && 3 \\
-1 && 2 && 5 \\
&0 && 4
\end{array}
\;\;\;and \qquad
\overline{T_A}=\begin{array}{c c c c c}
&& b \\
& a && b \\
-1 && a && 5 \\
&0 && 4
\end{array},
\]
with $0 < a < b < 4$. The number of free columns of $T_A$ is $2$.
\end{ex}

\begin{lem}
 \label{lem: size of an equivalence class}
 Let $S$ be a centred Catalan set, $l>0$ and $T$ an $(S,l)$-tree with $f$ free columns. Then the size of the equivalence class $\overline T$ of $T$ is given by
 \[
  |\overline T|= \binom{l-1}{f}.
 \]
\end{lem}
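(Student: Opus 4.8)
The plan is to unravel the definition of the equivalence relation on $(S,l)$-trees and observe that an equivalence class $\overline{c}$ is parametrised precisely by the choice of the actual values of the $f$ free columns. First I would note that, by definition, all entries $c_{i,j}$ with $c_{i,j}\le 0$ or $c_{i,j}\ge l$ are \emph{fixed} across the class, so the only freedom lies in the $f$ positions whose values lie strictly between $0$ and $l$, i.e.\ among the $l-1$ integers $\{1,2,\dots,l-1\}$. Condition (1) — that the relative order of all entries of the tree is preserved — then pins down \emph{which} of the $f$ free positions gets the smallest of the chosen free values, which gets the next, and so on: the order pattern among the free positions (and their interleaving with the fixed entries) is an invariant of the class. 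Hence once we pick which $f$-subset of $\{1,\dots,l-1\}$ the free values come from, the assignment of those $f$ values to the $f$ free positions is completely forced.

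The key steps, in order, would be: (i) show the map $\overline{c}\ni c' \mapsto \{c'_{i,j} : 0 < c'_{i,j} < l\}$, sending a tree to the $f$-subset of $\{1,\dots,l-1\}$ given by its free entries, is well defined on the class and that distinct elements of $\overline{c}$ have distinct images — the latter because, given the fixed entries and the common order pattern, the multiset of free values determines the tree; (ii) show this map is surjective onto all $f$-subsets of $\{1,\dots,l-1\}$, i.e.\ that any such $f$-subset, when its elements are inserted into the free positions in the (unique) order dictated by the order pattern of $c$, yields a genuine $(S,l)$-tree equivalent to $c$. For (ii) one must check that the resulting array still satisfies the $(S,l)$-tree inequalities (weakly increasing NE/SE, strictly increasing E, the boundary/deleted-diagonal conditions). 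This follows because the inequalities are all comparisons between entries, and we are only substituting values that sit in the same "slots" of the total order as the original free entries did — strict inequalities stay strict and weak ones stay weak since we insert $f$ \emph{distinct} integers respecting the pattern, and the interleaving with the fixed entries (which all lie $\le 0$ or $\ge l$, hence strictly below or strictly above every value in $\{1,\dots,l-1\}$) is automatically consistent.

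Combining (i) and (ii), $\overline{c}$ is in bijection with the set of $f$-element subsets of $\{1,\dots,l-1\}$, giving $|\overline{c}| = \binom{l-1}{f}$.

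The main obstacle I anticipate is step (ii): verifying that an arbitrary choice of free values, reinserted according to the order pattern, really does produce a valid $(S,l)$-tree rather than just an array of the right shape. In particular one needs to be careful at the free positions that are adjacent to fixed entries or that lie at the tops of the deleted NE/SE diagonals (where, per the definition, a bottom entry "does not have to be less than its right neighbour" etc.), and to confirm that no free entry is ever forced to equal a fixed entry — which is guaranteed precisely because free entries lie strictly between $0$ and $l$ while all fixed entries are $\le 0$ or $\ge l$. Making this "the order pattern plus the fixed entries determines everything" argument fully rigorous — ideally by an explicit induction on the entries of the tree ordered by value — is where the real work lies; the counting itself is then immediate.
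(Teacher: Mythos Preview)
Your proposal is correct and follows essentially the same approach as the paper's (very terse) proof: an element of $\overline{c}$ is determined by, and determines, the $f$-element subset of $\{1,\dots,l-1\}$ consisting of its free values. One small caveat: you repeatedly speak of ``$f$ free positions'', but in general more than $f$ entries of the tree can lie in $(0,l)$ --- what equals $f$ is the number of \emph{distinct} such values (this is how the number of free columns is defined), and positions sharing a value in $c$ must share it throughout the class by condition~(1); with that adjustment your argument goes through, and your careful attention to step~(ii), which the paper simply asserts in one line, is warranted but --- as you yourself outline --- straightforward to resolve since every defining inequality of an $(S,l)$-tree is a comparison between entries and the fixed entries all lie outside $(0,l)$.
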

\begin{proof}
 Let $x_1,\ldots,x_f$ be entries in $T$ such that $0<x_1< \ldots < x_f <l$. Every element of the equivalence class $\overline T$ is uniquely described by the sequence of these values $(x_1, \ldots, x_f)$. Hence
 \[
  |\overline T| = |\{(x_1,\ldots,x_f): 0<x_1< \ldots < x_f <l \}|= \binom{l-1}{f}.\qedhere
 \] 
\end{proof}

Let $S=\{s_1,\ldots,s_u,0,s_{u+1},\ldots,s_n\}$ be a centred Catalan set of size $n+1$ with $s_1< \ldots <s_u< 0< s_{u+1}< \ldots < s_n$. We define two Young diagrams $\lambda(S), \mu(S)$ via
\begin{align*}
\lambda(S)&= (u+1-s_{u+1},\ldots, n-s_n)^\prime,\\
\mu(S)&= (-s_1-u,\ldots, -s_u-1),
\end{align*}
where $\lambda^\prime$ denotes the conjugate Young diagram, see Figure \ref{fig: skew shape}.
Alternatively, one can describe $\lambda(S)$ and $\mu(S)$ as the smallest Young diagrams such that $\lambda(S)/ \mu(S)$ is the area enclosed between the paths $\tilde{\lambda}(S)=(\tilde{\lambda}_i)_{1 \leq i \leq n}$ and $\tilde{\mu}(S)=(\tilde{\mu}_i)_{1 \leq i \leq n}$, which are defined in the following table.
\begin{center}
 \begin{tabular}{c || c |c}
 & $\tilde{\lambda}_i$ & $\tilde{\mu}_i$\\
 \hline
 $\{-i,i\} \subseteq S$ &E &N\\
 $-i \in S, i \notin S$ &N &N\\
 $i \in S, -i \notin S$ &E &E\\
 $-i,i \notin S$ &N &E\\
 \end{tabular}
\end{center}
Using this description, it is easy to see that $\mu(S)$ is included in $\lambda(S)$ and that the following holds
\begin{equation}
\label{eq: areas are equal}
 |\lambda(S)/ \mu(S)| = \ar(\M(S)).
\end{equation}

\begin{rem}
\label{rem: diagrams and concatenation}
Let $S_1,S_2$ be two centred Catalan sets, each of the form $S_i=\{s_{i,1},\ldots, s_{i,u_i},0,s_{i,u_i+1},\ldots,s_{i,n_i}\}$ with $s_{i,1} < \ldots < s_{i,u_i} < 0 < s_{i,u_i+1} < \ldots < s_{i,n_i}$ and $\lambda(S_i)= (\lambda_{i,j})_{1 \leq j \leq u_i}$ and $\mu(S_i)=(\mu_{i,j})_{1 \leq j \leq u_i}$ for $1 \leq i \leq 2$. It follows from the definition of the Young diagrams $\lambda,\mu$ that the diagrams corresponding to the concatenation $S_1 \circ S_2$ are given by
\begin{align*}
\lambda(S_1 \circ S_2) = (\lambda_{2,1}+u_1,\ldots, \lambda_{2,u_2}+u_1,\lambda_{1,1}, \ldots, \lambda_{1,u_1}),\\
\mu(S_1 \circ S_2) = (\mu_{2,1}+u_1,\ldots, \mu_{2,u_2}+u_1,\mu_{1,1}, \ldots, \mu_{1,u_1}).
\end{align*}
For an example, see Figure \ref{fig: skew shape}.
Hence the size of the skew-shaped Young diagram is given by
\[
|\lambda(S_1 \circ S_2)/\mu(S_1 \circ S_2)|=|\lambda(S_1)/\mu(S_1)|+|\lambda(S_2)/\mu(S_2)|.
\]

\begin{figure}
\centering
\includegraphics[width=\textwidth]{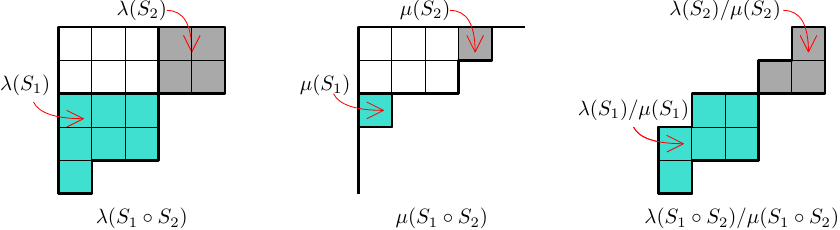}
\caption{\label{fig: skew shape} The (skew-shaped) Young diagrams corresponding to $S_1=\{ -4,-2,-1,0,1,3,4\},S_2=\{ -3,-1,0,1,2\}$ and $S_1 \circ S_2$.}
\end{figure}
\end{rem}

\begin{lem}
 \label{lem: number of equivalence classes}
 Let $S$ be an irreducible centred Catalan set of size $n$, then the following are true.
\begin{enumerate}
\item The number of free columns of an $(S,l)$-tree is at most $|\lambda(S)/ \mu(S)|$.
\item For $l > |\lambda(S)/ \mu(S)|$, the equivalence classes of $(S,l)$-trees are in bijection with the equivalence classes of $(S, |\lambda(S)/ \mu(S)|+1)$-trees.
\item For $l > |\lambda(S)/ \mu(S)|$ the number of equivalence classes with  maximal free columns is equal to $f^{\lambda(S)/ \mu(S)}$, the number of standard Young tableaux of skew shape $\lambda(S)/ \mu(S)$.
\end{enumerate}
\end{lem}

\begin{proof}
 Let $T$ be an $(S,l)$-tree. We draw a box around all entries of $T$ that are not fixed by definition. (An example is on the left side of Figure \ref{fig: from ASt to SYT}.) Denote by $b_i$ the number of boxes in the $i$-th row from the bottom of $T$. The definition of $(S,l)$-trees implies that
 \begin{itemize}
 \item there is no box in the bottom row,
 \item there is a box north-west of the leftmost box in row $i$ iff $-i \in S$,
 \item there is a box north-east of the rightmost box in row $i$ iff $i \in S$.
 \end{itemize}
Hence, the outer shape is determined by the two paths $\lambda^\prime$ and $\mu^\prime$, where the $i$-th steps $\lambda^\prime_i,\mu^\prime_i$ of $\lambda^\prime, \mu^\prime$ are given by
 \begin{center}
 \begin{tabular}{c || c |c}
 & $\lambda^\prime_i$ & $\mu^\prime_i$\\
 \hline
 $\{-i,i\} \subseteq S$ &NE &NW\\
 $-i \in S, i \notin S$ &NW &NW\\
 $i \in S, -i \notin S$ &NE &NE\\
 $-i,i \notin S$ &NW &NE\\
 \end{tabular}
\end{center}
By rotating the box complex by $45^\circ$ clockwise, we obtain the skew-shaped diagram $\lambda(S)/\mu(S)$ whose entries are the entries of $T$ not fixed by definition. Hence the maximal number of free columns is $|\lambda(S)/ \mu(S)|$.\\

Let $l>|\lambda(S)/ \mu(S)|$ and $\overline{T}$ be an equivalence class of $(S,l)$-trees with $f$ free columns. Then there exists a representative $T$ such that the entries that lie in a box have values between $1$ and $f$. Denote by $T^\prime$ the $(S,|\lambda(S)/ \mu(S)|+1)$-tree which is obtained from $T$ by subtracting $l-(|\lambda(S)/ \mu(S)|+1)$ from all positive entries in $T$ which do not lie in a box. The map $\overline{T} \mapsto \overline{T^\prime}$ is then a bijection of equivalence classes of $(S,l)$-trees to equivalence classes of $(S,|\lambda(S)/ \mu(S)|+1)$-trees.\\

The above argument implies that we can restrict ourselves to $l=$ \linebreak$|\lambda(S)/ \mu(S)|+1$.  For an equivalence class with a maximal number of free columns, all entries in the boxes are different. Since $l=|\lambda(S)/ \mu(S)|+1$, the entries in the boxes are the integers $1, \ldots, |\lambda(S)/ \mu(S)|$. The entries in an $\st$-tree are strictly increasing along rows from left to right and weakly increasing along north-east and south-east diagonals from bottom to top. Therefore, the filling of the skew-shaped Young diagram is strictly increasing along columns and rows, i.e., it is a skew-shaped standard Young tableau. Hence every equivalence class with maximal free columns corresponds to a standard Young tableau of skew shape $\lambda(S)/\mu(S)$. It is obvious that the converse is also true.
\end{proof}

  \begin{figure}
 \centering
 \includegraphics{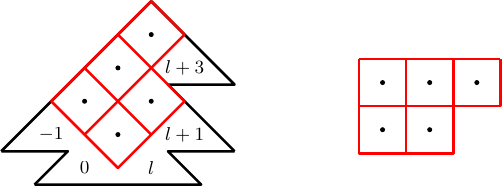}
 \caption{\label{fig: from ASt to SYT} Schematic representation of an $(S,l)$-tree and the skew-shaped Young diagram $\lambda(S)/\mu(S)$ for $S=\{-2,-1,0,1,2,-4\}$.}
\end{figure}

We can now prove our second main result.

\begin{proof}[{Proof of Theorem \ref{thm: Catalan weight is polynomial}}]
 By Proposition \ref{prop: multiplicity of weights} and Remark \ref{rem: diagrams and concatenation}, it suffices to assume that $S$ is irreducible. As a consequence of Theorem \ref{thm: s_t_trees} and Proposition \ref{prop: bij s,t-trees}, the number $w_l(S)$ of $(S,l)$-trees is a polynomial in $l$. 
Let $d(S)$ be the degree of $w_l(S)$.  We show by induction on $\ar(\M(S))$ that $d(S) \leq \ar(\M(S))$. Write $S$ in the form $S=\{s_1, \ldots,s_u, 0, s_{u+1}, \ldots, s_n\}$ with $s_1<\ldots<s_u<0<s_{u+1}< \ldots < s_n$. The degree $d(S)$ is at most the degree of the polynomial $M_n(x_1,\ldots,x_n)$ minus the number of $\overline{\Delta}, \underline{\Delta}$ operators appearing in \eqref{eqn: s_t_trees} and therefore
  \begin{equation}
  \label{eqn: degree inequality}
  d(S) \leq \binom{n}{2} - \sum_{i=1}^u (-s_i-1) -\sum_{i=u+1}^n (s_i-1) =
  \binom{n}{2}- \sum_{i \in S\setminus\{0\}}(|i|-1).
  \end{equation}
We claim that
\begin{equation}
\label{eq: to show I}
\binom{n}{2}=\sum_{i \in S\setminus\{0\}}(|i|-1) + \ar(\M(S)),
\end{equation}
which, together with \eqref{eqn: degree inequality}, will imply that $d(S) \leq \ar(\M(S))$.
The area $\ar(\M(S))$ is equal to $0$ iff for all $1 \leq i \leq n$ either $i \in S$ or $-i \in S$, hence
 \[
 \sum_{i \in S\setminus\{0\}}(|i|-1)= \sum_{i=1}^n (i-1)=\binom{n}{2}.
 \]
 If $\ar(\M(S))>0$, denote by $i_0$ the largest integer with $1 \leq i_0 \leq n-1$ and $\{-i_0,i_0 \} \subseteq S$. Then the centred Catalan set
 \[
 S^\prime:=\begin{cases}
 (S\setminus\{i_0\}) \cup \{ i_0+1\} & (i_0+1) \notin S,\\
 (S\setminus\{i_0\}) \cup \{ -(i_0+1)\} & -(i_0+1) \notin S,\\
 \end{cases}
 \]
  is well defined. 
The paths $\M(S)$ and $\M(S^\prime)$ differ only in the $i_0$-th and $(i_0+1)$-st step, as shown in Figure \ref{fig: change of S}. This implies that $\ar(\M(S))=\ar(\M(S^\prime))+1$. On the other hand, the sum over all $i \in S\setminus\{0\}$ in \eqref{eq: to show I} is one less than the sum over all $i \in S^\prime\setminus \{0\}$, which proves \eqref{eq: to show I}.\\
 \begin{figure}
 \centering
 \includegraphics{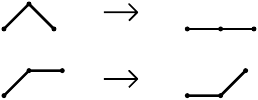}
 \caption{\label{fig: change of S} The two possibilities for changes between $M(S)$ and $M(S^\prime)$ at the positions $i_0,i_0+1,i_0+2$.}
\end{figure}

By Lemma \ref{lem: size of an equivalence class}, an equivalence class of $(S,l)$-trees with $f$ free columns contributes $\binom{l-1}{f}$ to $w_l(S)$. Lemma \ref{lem: number of equivalence classes} states that there are $f^{\lambda(S)/ \mu(S)}$ equivalence classes with $|\lambda(S)/\mu(S)|=\ar(\M(S))$ free columns. Hence $w_l(S)$ has degree $\ar(\M(S))$ and the leading coefficient is $\frac{f^{\lambda(S)/ \mu(S)}}{|\lambda(S)/\mu(S)|!}$.
\end{proof}

Since $w_l(M)=\sum_{S: \M(S)=M}w_l(S)$, the following is a direct consequence of the above theorem.

\begin{cor}
Let $M=M_1 \circ \ldots \circ M_k$ be a Motzkin path and $M_1,\ldots, M_k$ its irreducible components. The weight function $w_l(M)$ is a polynomial in $l$ of degree $\ar(M)$ with leading coefficient
\begin{equation}
\label{eq: leading cof motzkin path}
\prod_{i=1}^k\sum_{S: \M(S)=M_i}\frac{f^{\lambda(S)/\mu(S)}}{\ar(M_i)!}.
\end{equation}
\end{cor}

It is unknown to the author if \eqref{eq: leading cof motzkin path} can be simplified.

\begin{rem}
\label{rem: ASTs and FPLs}
For a centred Catalan set of size $n$, denote by $\pi(S)$ the non-crossing matching of size $n$ corresponding to $\D(S)$. Let  $S_1,S_2$ be two irreducible centred Catalan sets of size $n_1$ and $n_2$, respectively, and let $m$ be a positive integer. We define $S(m)=\{0,1,\ldots, m \} \cup \s_{m}(S_1) \cup \{n_1+m, n_1+m+1,\ldots,n_1+2m+1\} \cup \s_{n_1+2m+1}(S_2)$. The non-crossing matching $\pi(S(m))$ consists of $\pi(S_1)$ enclosed by $m$ `small arches' on every side concatenated with $\pi(S_2)$, see Figure \ref{fig: rem}. By Proposition \ref{prop: multiplicity of weights} and the fact that $w(\{0,1,\ldots,m\})=1$, the weight function $w(S(m))$	 can be written as
\begin{multline*}
w(S(m))=w_2(S(m))\\
=w_{2}(\{0,1,\ldots m\})w_{2(m+1)}(S_1)w_{2(n_1+m+1)}(\{0,1,\ldots m+1\})w_{2(n_1+2m+3)}(S_2)\\
=w_{2m}(S_1)w_{2(n_1+2m+3)}(S_2),
\end{multline*}
Hence Theorem \ref{thm: Catalan weight is polynomial} implies that $w(S(m))$ is a polynomial in $m$ of degree $|\lambda(S_1)/\mu(S_1)|+|\lambda(S_2)/\mu(S_2)|$ with leading coefficient
\begin{equation}
\label{eq: leading coef Sm}
\frac{f^{\lambda(S_1)/ \mu(S_1)} \, f^{\lambda(S_2)/ \mu(S_2)}}{|\lambda(S_1)/\mu(S_1)|!\cdot |\lambda(S_2)/\mu(S_2)|!}.
\end{equation}

\begin{figure}
 \centering
 \includegraphics[width=.7\textwidth]{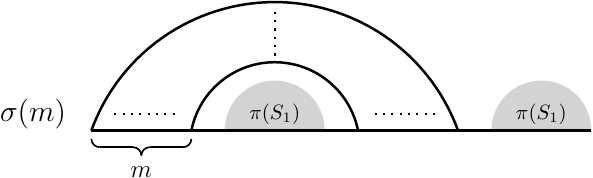}
 \includegraphics[width=.7\textwidth]{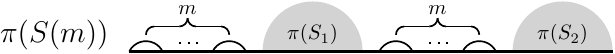}
  \caption{\label{fig: rem} Graphical representation of the non-crossing matchings $\sigma(m)$ in the top and $\pi(S(m))$ in the bottom.
  }
\end{figure}

An analogous theorem \cite[Theorem $1$]{Aigner_FPL}, \cite{CaselliKrattenthalerLassNadeau04} exists for fully packed loops (FPLs), objects that are equinumerous to ASTs. Denote by $\sigma(m):=$\linebreak$(\pi(S_1))_m\pi(S_2)$ the non-crossing matching which consists of $\pi(S_1)$ enclosed by $m$ `nested arches' concatenated with $\pi(S_2)$, see Figure \ref{fig: rem}. Then this theorem states that the number of FPLs associated to $\sigma(m)$ is a polynomial in $m$ of degree $|\lambda(\pi(S_1))|+|\lambda(\pi(S_2))|$ with leading coefficient
\begin{equation}
\label{eq: leading coef SigmaM}
\frac{f^{\lambda(\pi(S_1))} \, f^{\lambda(\pi(S_2))}}{|\lambda(\pi(S_1))|!\cdot |\lambda(\pi(S_2))|!},
\end{equation}
where $\lambda(\pi)$ is a Young diagram associated to the non-crossing matching $\pi$. 
This is remarkable for three reasons. First, the refinements of ASTs and FPLs by Catalan objects are of different nature, however we have in both cases a polynomiality theorem. Second, the degree and the leading coefficient of the polynomials are almost the same, with the important difference that the degree and the leading coefficient are linked to standard Young tableaux in the FPL case, whereas they are linked to skew-shaped standard Young tableaux in the AST case.
 Third, the two refinements seem to be in some sense `dual': for one, the polynomiality results differ by the dual notions of `nested arches' and `small arches'. Then, the number of FPLs linked to a non-crossing matching is minimal if it consists of nested arches and is maximal if it consists of small arches. Furthermore, computer experiments suggest that for an centred Catalan set $S$ the weight $w_l(S)$ is maximal
  if $\pi(S)$ consists of nested arches.
 \end{rem}


\section{A constant term expression for AS-trapezoids}
\label{sec: const term}

Using Theorem \ref{thm: s_t_trees}, we will derive a constant term expression for $w_l(S)$. Our proof follows the same steps as the proof of Theorem 7 in \cite{Fischer1804.03630}.
The following identities will be needed later. The first is an easy consequence of the definitions of the operators $E_x$, $\overDelta{x}$, and $\underDelta{x}$, while the second is stated in \cite[Lemma 5]{Fischer07}.
\begin{equation}
\label{eq: operator}
\Id +\overDelta{y} + \overDelta{x} \overDelta{y}= E_x E_y(\Id -\underDelta{x}+\underDelta{x} \underDelta{y}) =E_y(\Id +\overDelta{x}\underDelta{y}),
\end{equation}
\begin{equation}
\label{eq: M rotation}
M_n(x_1,\ldots, x_n)= (-1)^{n-1}M_n(x_2,\ldots, x_n,x_1-n).
\end{equation}

\begin{proof}[{Proof of Theorem \ref{thm: constant term identity}}]
We set $\mathbf{s}=\{-s_1-1,\ldots, -s_u-1\},\mathbf{t}=\{s_{u+1}-1,\ldots, s_n-1\},\mathbf{k}=\{s_1+1, \ldots s_u+1, l+s_{u+1}-1, \ldots, l+s_n-1\}$. Since $(n,l)$-AS-trapezoids correspond to $\st$-trees with bottom entries $\mathbf{k}$, Theorem \ref{thm: s_t_trees} states
\begin{multline}
\label{eq: wl with operators}
w_l(S)=
\left.\left(\prod_{i=1}^u (-\overDelta{x_i})^{-s_i-1} \prod_{i=u+1}^n \underDelta{x_{i}}^{s_i-1}\right) M_{n}(x_1,\ldots, x_{n}) \right|_{\mathbf{x}=\mathbf{k}}\\
 =\left.\left(\prod_{i=1}^u (-\overDelta{x_i})^{-s_i-1}E_{x_i}^{s_i+1} \prod_{i=u+1}^n \underDelta{x_i}^{s_i-1}E_{x_i}^{l+s_i-1} \right) M_n(x_1,\ldots, x_n) \right|_{\mathbf{x}=\mathbf{0}}\\
= \left.\left(\prod_{i=1}^u (-\underDelta{x_i})^{-s_i-1}\prod_{i=u+1}^n \overDelta{x_i}^{s_i-1}E_{x_i}^{l} \right) M_n(x_1,\ldots, x_n) \right|_{\mathbf{x}=\mathbf{0}}.
\end{multline}
Using \eqref{eq: operator} and \eqref{eq: M rotation}, we can write $M_n(x_1,\ldots, x_n)$ as
\begin{multline}
\label{eq: close look at M}
 M_n(x_1,\ldots,x_n) = (-1)^{u(n-1)}M_n(x_{u+1},\ldots, x_n,x_1-n,\ldots,x_u-n)\\
 =(-1)^{u(n-1)} \prod_{i=1}^u E_{x_i}^{-n}
 \prod_{1 \leq i <j \leq u}E_{x_i}E_{x_j}(\Id - \underDelta{x_i}+\underDelta{x_i}\underDelta{x_j})\\
 \times \prod_{u+1 \leq i <j \leq n} (\Id+\overDelta{x_j}+\overDelta{x_i}\overDelta{x_j})
 \prod_{i=1}^u \prod_{j=u+1}^n E_{x_i}(\Id+\underDelta{x_i}\overDelta{x_j})
\\
 \times (-1)^{u(n-1)} \prod_{1 \leq i < j \leq n}\frac{x_j-x_i}{j-i}
 = \OP \prod_{i=1}^u  E_{x_i}^{-1}  \det_{1 \leq i,j \leq n}\left( \binom{x_i}{j-1} \right),
\end{multline}
with 
\begin{multline*}
\OP= \prod_{1 \leq i <j \leq u}(\Id - \underDelta{x_i}+\underDelta{x_i}\underDelta{x_j})\\
\times \prod_{u+1 \leq i <j \leq n} (\Id+\overDelta{x_j}+\overDelta{x_i}\overDelta{x_j})
\prod_{i=1}^u \prod_{j=u+1}^n (\Id+\underDelta{x_i}\overDelta{x_j}).
\end{multline*}
Using \eqref{eq: wl with operators} and \eqref{eq: close look at M}, we obtain the following expression for $w_l(S)$
\begin{multline}
\label{eq: wl umgeformt}
w_l(S)=  \OP 
 \left.\prod_{i=1}^u(-\underDelta{x_i})^{-s_i-1}E_{x_i}^{-1}
\prod_{i=u+1}^n \overDelta{x_i}^{s_i-1}E_{x_i}^{l} \det_{1 \leq i,j \leq n}\left( \binom{x_i}{j-1} \right)\right|_{\mathbf{x}=\mathbf{0}}\\
  =\left.(-1)^{u+\sum_{i=1}^u s_i}\OP \sum_{\sigma \in \Sgrp_n}\sgn(\sigma)
 \prod_{i=1}^u \binom{x_i+s_i}{\sigma(i)+s_i}\prod_{i=u+1}^n \binom{x_i+l}{\sigma(i)-s_i}
 \right|_{\mathbf{x}=\mathbf{0}}.
\end{multline}
Since $(-\underDelta{x}) \binom{x+s}{j+s}(-1)^s=E_s^{-1} \binom{x+s}{j+s}(-1)^s$ and $\overDelta{x} \binom{x+l}{j-s}=E_s \binom{x+l}{j-s}$, we can replace $\OP$ by $\OP^\prime$, where $\OP^\prime$ is $\OP$ with every $-\underDelta{x_i}$ replaced by $E_{s_i}^{-1}$ and every $\overDelta{x_j}$ replaced by $E_{s_j}$. As a consequence of this, we can now evaluate \eqref{eq: wl umgeformt} at $x_1,\ldots,x_n=0$ and obtain
\begin{align*}
w_l(S)=&
\OP^\prime(-1)^{u+\sum_{i=1}^u s_i} \sum_{\sigma \in \Sgrp_{n}}\sgn(\sigma)
\prod_{i=1}^u \binom{s_i}{\sigma(i)+s_i}\prod_{i=u+1}^n \binom{l}{\sigma(i)-s_i}.
\end{align*}
In order to evaluate this expression we use the following identity. Let $f:\mathbb{Z}^n \rightarrow \mathbb{R}$ be a function such that the generating function $F(x_1,\ldots,x_n):=\sum_{a \in \mathbb{Z}^n}f(a)x^a$ is a Laurent series and let $p(x_1,\ldots,x_n)$ be a Laurent polynomial. Then
\begin{align}
\label{eq: operator angewandt gleich ct}
p(E_{a_1},\ldots,E_{a_n})f(a)=\CT_{x_1,\ldots,x_n} \prod_{i=1}^nx_i^{-a_i}p(x_1^{-1},\ldots,x_n^{-1})F(x_1,\ldots,x_n),
\end{align}
where $\CT_{x_1,\ldots,x_n}$ denotes the constant term in $x_1,\ldots, x_n$.
This identity can be proven easily if $p$ is a monomial. The general statement follows immediately since $p$ is a linear combination of monomials. We apply \eqref{eq: operator angewandt gleich ct} for the function
\begin{multline*}
f(s_1,\ldots,s_n):= (-1)^{u+\sum_{i=1}^u s_i} \\
\times \sum_{\sigma \in \Sgrp_{n}}\sgn(\sigma)
\prod_{i=1}^u \binom{s_i}{\sigma(i)+s_i}\prod_{i=u+1}^n \binom{l}{\sigma(i)-s_i}.
\end{multline*}
By using
\begin{align*}
\sum_{s \in \mathbb{Z}}(-1)^s\binom{s}{\sigma+s}x^s &= (-x)^{-1}(-1-x^{-1})^{\sigma-1},\\
\sum_{s \in \mathbb{Z}}\binom{l}{\sigma-s}x^s &= x^{\sigma-l}(1+x)^{l}.
\end{align*}
we obtain for $F$
\begin{multline*}
F(x_1,\ldots,x_{n})= (-1)^{u}\sum_{\sigma \in \Sgrp_{n}}\sgn(\sigma)\\
\times \prod_{i=1}^u (-x_i)^{-1}(-1-x_i^{-1})^{\sigma(i)-1}\prod_{i=u+1}^{n}x_i^{\sigma(i)-l}(1+x_i)^{l}\\
=(-1)^{u}\det_{1 \leq i,j \leq n}\left(
\begin {cases}
 (-x_i)^{-1}(-1-x_i^{-1})^{j-1} \quad & i\leq u\\
x_i^{j-l}(1+x_i)^{l} & i >u
\end{cases}\right)\\
=\prod_{i=1}^u x_i^{-u}\prod_{i=u+1}^{n}x_i^{1-l}(1+x_i)^{l}
\prod_{\substack{1 \leq i<j\leq u, \\u+1 \leq i<j\leq n}}(x_j-x_i)\prod_{i=1}^u\prod_{j=u+1}^{n}(1+x_i^{-1}+x_j).
\end{multline*}
In the last step, we evaluated the determinant using the Vandermonde determinant evaluation.
Hence we obtain
\begin{multline*}
w_l(S)=\CT_{x_1,\ldots,x_{n}}\prod_{i=1}^ux_i^{-n-s_i}\\
\prod_{i=u+1}^n x_i^{-n-l-s_i+2}(1+x_i)^l
\prod_{1\leq i < j \leq n}(x_j-x_i)(1+x_i+x_ix_j). \qedhere
\end{multline*}
\end{proof}

\section{Roots of $w_l(S)$ and $w_l(M)$}
\label{sec: roots}

Studies of $w_l(S)$ and $w_l(M)$ for small centred Catalan sets $S$ and Motzkin paths $M$ show that $w_l(S)$ and $w_l(M)$ often have rational roots, where almost all of these are even integer roots. In Appendix \ref{app: Data}, we list  $w_l(S), w_l(M)$ for all irreducible centred Catalan sets up to size six and irreducible Motzkin paths up to length five. This section contains results and conjectures based on a first analysis of a slightly larger data set. The conjectures can be divided into three types.
\begin{itemize}
\item Conjecture \ref{conj: possible roots} predicts which rational numbers appear as roots of weight functions.
\item The conjectures in Section \ref{sec: roots of S} state relations between the rational roots of $w_l(S)$ and the beginning of the Motzkin path $\M(S)$.
\item In Section \ref{sec: roots of M}, a conjecture connects the rational roots of $w_l(M)$ to the ending of the Motzkin path $M$.
\end{itemize}
The ultimate goal is a description of all rational roots of $w_l(S)$ and $w_l(M)$ similar to \cite[Conjecture 1.2]{FonsecaNadeau11}.

\begin{conj}
\label{conj: possible roots}
\begin{enumerate}
\item Let $M$ be an irreducible Motzkin path of length $n\geq 8$. The rational roots of $w_l(M)$ lie in $\{-1,-2,\ldots,-2n+2\}$, and for every integer $x$ in this set there exists a Motzkin path $M$ such that $x$ is a root of $w_l(M)$.
\item Let $S$ be an irreducible centred Catalan set of size $n\geq 11$. The rational roots of $w_l(S)$ lie in 
$\{-1, -2,\ldots,-2n+4,-\frac{n^2-5n+7}{(n-3)}\}$, and for every rational number $x$ in this set there exists a centred Catalan set $S$ such that $x$ is a root of $w_l(S)$.
Furthermore, $-\frac{n^2-5n+7}{(n-3)}$ is a root of $w_l(S)$ iff  $S=\{-n+2,-1,0,1, \ldots, n-3\}$ or $S=\{-n+3, -n+4,\ldots, -1,0,1, n-2\}$.
\end{enumerate}
\end{conj}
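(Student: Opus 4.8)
Since this is a conjecture I only sketch the plan of attack; the confinement half is, as the paper notes, only partly within reach. I would work throughout with the identity isolated in the proof of Theorem~\ref{thm: Catalan weight is polynomial}: setting $d:=\ar(\M(S))$,
\[
w_l(S)=\sum_{f=0}^{d}a_f(S)\binom{l-1}{f},
\]
where $a_f(S)\in\Z_{\ge 0}$ counts the equivalence classes of $(S,l)$-trees with $f$ free columns and $a_d(S)=f^{\lambda(S)/\mu(S)}\ge 1$; summing over $S$ with $\M(S)=M$ gives the same kind of expansion for $w_l(M)$. Hence $d!\,w_l(S)$ is an integer polynomial with leading coefficient $f^{\lambda(S)/\mu(S)}$, and the rational root theorem already shows every rational root is $p/q$ with $q\mid d!$. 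Two things then have to be proved: that $q$ is actually forced to be $1$ (or $n-3$ in the two extremal cases), and that the roots lie in the stated interval.

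\textbf{Achievability.} Proposition~\ref{prop: multiplicity of weights} says concatenation shifts roots: a root $r$ of $w_l(S_2)$ produces the root $r-2|S_1|+2$ of $w_l(S_1\circ S_2)$, and likewise for Motzkin paths. Because the conjecture is about \emph{irreducible} objects I would not concatenate but instead evaluate $w_l$ on explicit irreducible families --- the near-staircase sets $\{-k,-1,0,1,\dots\}$ and their reflections, and the single-excursion Motzkin paths --- using either the $(S,l)$-tree count or the constant term of Theorem~\ref{thm: constant term identity}, which for such simple inputs should collapse (as the total count in Theorem~\ref{thm: AS-ts} does) to a product of affine factors in $l$; letting the parameter run over its range should sweep out all of $\{-1,\dots,-2n+4\}$ (resp.\ $\{-1,\dots,-2n+2\}$). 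The non-integer value $-\tfrac{n^2-5n+7}{n-3}=-(n-2)-\tfrac{1}{n-3}$ should emerge from exactly this computation for $S=\{-n+2,-1,0,1,\dots,n-3\}$ and its reflection, and its uniqueness then reduces to checking that no other $S$ yields a surviving prime denominator dividing $d!$.

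\textbf{Confinement and the main obstacle.} From the binomial expansion, $w_l(S)>0$ for every real $l>d$, since all $\binom{l-1}{f}$ are then positive; this kills roots above $d$ but not the positive roots below $d$ (one still needs $w_l(S)\ne 0$ for $1\le l\le d$, plausibly by a direct combinatorial nonvanishing argument) and, crucially, not the deep negative roots: $d$ is typically of order $n^2$, far larger than the window $[-2n+4,0]$, so this is a statement about the global shape of $w_l(S)$, not about its leading term. Putting $l=-m$ turns the expansion into the alternating sum $\sum_f(-1)^f a_f(S)\binom{m+f}{f}$, whose leading term is $f^{\lambda(S)/\mu(S)}m^{d}/d!$, and one must show it cannot vanish once $m\ge 2n-3$, uniformly over all irreducible $S$ of size $n$. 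Since the $a_f(S)$ are class counts, the realistic route is an injection between classes with $f$ and $f+2$ (or $f+1$) free columns --- in the spirit of the sign-reversing arguments of Section~\ref{sec: refined enum of AS-ts} --- that controls the alternation, or a positivity-preserving recursion in $n$ refining the splitting Lemma so as to track the individual $a_f(S)$; I expect this to be the genuinely hard part, while the small-$n$ failures ($n<8$, resp.\ $n<11$) are simply read off from the data in Appendix~\ref{app: Data}.
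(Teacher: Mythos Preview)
The statement is labelled a \emph{conjecture}; the paper offers no proof, only computational evidence and the surrounding partial results in Section~\ref{sec: roots}. So there is nothing to compare your attempt against, and it is appropriate that you present a programme rather than a proof.

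That said, one concrete slip: from the expansion $w_l(S)=\sum_f a_f(S)\binom{l-1}{f}$ you correctly deduce that $d!\,w_l(S)$ is an integer polynomial with leading coefficient $f^{\lambda(S)/\mu(S)}$, but the rational root theorem then gives $q\mid f^{\lambda(S)/\mu(S)}$, \emph{not} $q\mid d!$. This matters: the number $f^{\lambda(S)/\mu(S)}$ can be large and need not divide $d!$, so the denominator control you claim is weaker than stated, and in particular the reduction of the ``integrality'' part of the conjecture to checking whether $n-3$ can occur as a factor of $f^{\lambda(S)/\mu(S)}$ is not justified by what you wrote.

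On the achievability side your plan is sound in spirit and in line with what the paper's data suggests; note however that Proposition~\ref{prop: adding a height one piece} already manufactures, for each $n$, an irreducible Motzkin path with $(l+2n-2)$ as a factor, so combining it with Conjecture~\ref{conj: roots of cCs} (proved for $i\le 2$ in Proposition~\ref{prop: divisibility by 2j+1}) covers a chunk of the target set without any constant-term computation. On the confinement side you are right that the degree bound $d=\ar(\M(S))$ is quadratic in $n$ while the conjectured window is linear, so positivity of the binomial basis on $l>d$ is far too crude; your suggested injection between equivalence classes with $f$ and $f\pm 1$ free columns is an interesting idea, but nothing in the paper hints that such an injection exists, and you should regard this as the conjecture's genuine content rather than a technical step.
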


\subsection{Rational roots of $w_l(S)$}
\label{sec: roots of S}

\begin{conj}
\label{conj: roots of cCs}
Let $S$ be a centred Catalan set. Then 
\[
\{-i,\ldots, i\} \subseteq S \quad \Leftrightarrow \quad 
\prod_{k=0}^{\lfloor\frac{i-1}{2}\rfloor}(l+1+3k)_{i-2k} \textnormal{ divides } w_l(S),
\]
where $(x)_k:=x(x+1)\cdots (x+k-1)$ is the Pochhammer symbol.
\end{conj}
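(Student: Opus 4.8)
\emph{Strategy.} An elementary induction on $i$ via $(x)_{m+1}=(x)_m(x+m)$ rewrites the product in factored form, $P_i(l):=\prod_{j=1}^{i}(l+j)_{\lceil j/2\rceil}$, and gives $P_i(l)/P_{i-1}(l)=(l+i)_{\lceil i/2\rceil}$. Since $\{-i,\dots,i\}\subseteq S$ forces $\{-(i-1),\dots,i-1\}\subseteq S$, the forward implication will follow by induction on $i$ from
\[
\{-i,\dots,i\}\subseteq S\ \Longrightarrow\ (l+i)_{\lceil i/2\rceil}\ \text{divides}\ w_l(S)/P_{i-1}(l),
\]
with $P_{i-1}(l)\mid w_l(S)$ as inductive hypothesis. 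Writing $i_0$ for the largest $i$ with $\{-i,\dots,i\}\subseteq S$, the converse will follow once one shows that the order of vanishing of $w_l(S)$ at $l=-(i_0+1)$ equals that of $P_{i_0}(l)$ there: combined with $P_{i_0}(l)\mid w_l(S)$ this gives $P_{i_0+1}\nmid w_l(S)$, hence $P_i\nmid w_l(S)$ for every $i>i_0$.

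\emph{Reduction to irreducible $S$.} Let $S=S_1\circ\cdots\circ S_p$ with the $S_j$ irreducible; by Proposition \ref{prop: multiplicity of weights}, $w_l(S)=\prod_j w_{l+c_j}(S_j)$ with $c_1=0$ and $c_j\ge 2|S_1|-2$ for $j\ge 2$. Because every nonzero element of $\s_{|S_1|-1}(S_2\circ\cdots\circ S_p)$ has absolute value $\ge|S_1|$, we get $\{-i,\dots,i\}\subseteq S\iff\{-i,\dots,i\}\subseteq S_1$, and then $i\le\frac{|S_1|-1}{2}$, so every root of $P_i(l)$ lies in $(-\frac{3}{4}|S_1|,0)$. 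Hence the forward implication reduces to irreducible $S$. For the converse one also uses the one-sided part of Conjecture \ref{conj: possible roots} that no irreducible weight polynomial has a rational root exceeding $-1$ — a weaker statement I would establish separately, e.g.\ from the constant-term formula together with the positivity of the leading coefficient in Theorem \ref{thm: Catalan weight is polynomial} — whereby every rational root of $w_{l+c_j}(S_j)$ with $j\ge 2$ is at most $-1-c_j\le-(2|S_1|-1)$, strictly below every root of $P_i$; so $P_i\mid w_l(S)\iff P_i\mid w_l(S_1)$, and the converse reduces to irreducible $S$ as well.

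\emph{The irreducible case.} Let $S=\{s_1,\dots,s_u,0,s_{u+1},\dots,s_n\}$ be irreducible with $\{-i,\dots,i\}\subseteq S$, so that $s_{u-i+1}=-i,\dots,s_u=-1$, $s_{u+1}=1,\dots,s_{u+i}=i$ and $m_1(S)=\cdots=m_i(S)=1$. In the $(S,l)$-tree picture of Proposition \ref{prop: bij s,t-trees} the bottom row $\mathbf{k}$ of every $(S,l)$-tree then contains the consecutive blocks $-i+1,\dots,0$ and $l,\dots,l+i-1$, and the box diagram of Lemma \ref{lem: number of equivalence classes} carries a fixed staircase $b_1=0,b_2=1,\dots,b_{i+1}=i$ in its bottom $i+1$ rows, sitting in the central gap between these blocks. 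I would prove $(l+i)_{\lceil i/2\rceil}\mid w_l(S)/P_{i-1}(l)$ by reorganising $w_l(S)=\sum_c\binom{l-1}{f(c)}$ (Theorem \ref{thm: Catalan weight is polynomial}, $f(c)$ the number of free columns of the class $c$) according to the restriction of $c$ to the part of the tree strictly above this staircase: for each such restriction the contribution of the staircase — an interval of $i$ strictly increasing values confined to $\{1,\dots,l-1\}$ by the two blocks and constrained by the $i$ rows beneath — should collapse, via Lemma \ref{lem: size of an equivalence class} and a telescoping of forward/backward differences, to $(l+i)_{\lceil i/2\rceil}$ times an integer-valued polynomial in $l$, the parity alternation producing $\lceil i/2\rceil$ rather than $i$ factors for the same reason as in Lemmas \ref{lem: structure of ones} and \ref{lem: number of equivalence classes} (so the top-degree instance of this computation is literally the $l$-free content of Lemma \ref{lem: number of equivalence classes}, and the two arguments can be run in tandem). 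For the converse one uses $w_l(S)=\sum_f N_f\binom{l-1}{f}$, where $N_f$ counts the equivalence classes with $f$ free columns, so that $w_{-r}(S)=\sum_f(-1)^fN_f\binom{r+f}{r}$, and computes the order of vanishing at $l=-r$ by reading $\binom{r+f}{r}$ as the number of weakly increasing fillings of the $f$ free columns by values in $\{1,\dots,r+1\}$ and exhibiting a sign-reversing involution on (class, filling) pairs whose set of fixed points accounts for exactly the multiplicity carried by $P_{i_0}$.

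\emph{Main obstacle.} The crux is the extraction of the factor $(l+i)_{\lceil i/2\rceil}$ in the forward step, and its mirror, the involution in the converse; neither is formal, because $\{-i,\dots,i\}\subseteq S$ does \emph{not} make $S$ a concatenation $\{-i,\dots,i\}\circ S'$ (for instance $S=\{-1,0,1,2\}$ is irreducible with $\{-1,0,1\}\subseteq S$), so one must exploit the \emph{local} structure of the $(S,l)$-trees adjacent to the fixed staircase rather than any global splitting. The most promising alternative, which I would attempt first, sidesteps the equivalence-class bookkeeping and works from the constant-term identity of Theorem \ref{thm: constant term identity}: when $s_{u+1},\dots,s_{u+i}=1,\dots,i$, the block $\prod_{p=u+1}^{u+i}x_p^{2-l}(1+x_p)^l$ together with the consecutive exponents $-n-s_p$ in those variables should, after the manipulations used in the proof of Theorem \ref{thm: constant term identity} (absorbing the relevant factors $(x_q-x_p)$ and $(1+x_p+x_px_q)$ into a Vandermonde-type determinant), let one pull out $P_i(l)$ as an explicit polynomial prefactor of the constant term, the pairing of those two kinds of factor among the $i$ variables accounting for the floor versus ceiling. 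That such a factorisation should hold is corroborated by the analogous FPL divisibility theorems for nested-arch link patterns in \cite{Aigner_FPL, on_the_number_of_FPL, Polynomvermutungen} and by the duality between the two Catalan refinements discussed in Remark \ref{rem: ASTs and FPLs}.
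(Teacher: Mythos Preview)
This statement is \emph{Conjecture}~\ref{conj: roots of cCs}, and the paper does not prove it in general: immediately after stating it the paper says ``We can prove the above conjecture for $i=1,2$, however our proof technique does not extend to $i>2$,'' and then proves exactly those two cases in Proposition~\ref{prop: divisibility by 2j+1}. So the right comparison is between your outline and the paper's partial result.

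Your rewriting $P_i(l)=\prod_{j=1}^i(l+j)_{\lceil j/2\rceil}$ and the recursion $P_i/P_{i-1}=(l+i)_{\lceil i/2\rceil}$ are correct. The reduction to irreducible $S$ is sound, but you do not need to invoke Conjecture~\ref{conj: possible roots}: the roots of $P_i$ are negative integers $-m$ with $m\le i+\lceil i/2\rceil-1$, and since $i\le(|S_1|-1)/2$ one checks $-m+c_j\ge 0$ for every $j\ge 2$; as the paper notes at the end of the proof of Proposition~\ref{prop: divisibility by 2j+1}, $w_k(S')>0$ for every integer $k\ge 0$, so the factors $w_{l+c_j}(S_j)$ with $j\ge 2$ never vanish at a root of $P_i$. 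This cleans up your reduction without appealing to an open conjecture.

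The substantive gap is the one you yourself flag. Your ``irreducible case'' is a plan (``should collapse'', ``I would prove'', ``the most promising alternative, which I would attempt first''), not an argument: neither the equivalence-class reorganisation nor the constant-term extraction is carried out, and the paper's experience is that the direct approach stalls already at $i=3$. The paper's proof for $i=1,2$ proceeds differently from what you sketch: it writes $w_l(S)$ as an iterated sum over the entries in the bottom rows of the $(S,l)$-tree and uses that $\sum_{x=0}^{l}p(x)$, as a polynomial in $l$, vanishes at $l=-1$ (and a two-variable analogue at $l=-2$); for the converse direction at $i=2$ it exhibits a concrete $(S,-2)$-tree configuration that reduces $w_{-2}(S)$ to a nonzero $w_0(S')$. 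Your equivalence-class/binomial framework is more conceptual, but it does not yet produce the factor $(l+i)_{\lceil i/2\rceil}$ for any $i\ge 3$, which is exactly where the paper stops too. In short, you have reorganised the problem and correctly isolated the obstruction, but the conjecture remains open after your proposal just as it does in the paper.
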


For Motzkin paths, the above conjecture implies that the weight function has certain integer roots that depend on the number of consecutive north-east steps at the beginning of the path.
We can prove the above conjecture for $i=1,2$, however our proof technique does not extend to $i>2$.\\

\begin{prop}
 \label{prop: divisibility by 2j+1}
 Let $S$ be a centred Catalan set. Then the following holds:
 \begin{align*}
 \{-1,1\}\subseteq S &\Leftrightarrow  (l+1)|w_l(S),\\
 \{-2,-1,1,2\} \subseteq S& \Leftrightarrow  (l+1)(l+2)|w_l(S). 
\end{align*}  
\end{prop}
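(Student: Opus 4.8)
\textbf{Plan for the proof of Proposition \ref{prop: divisibility by 2j+1}.}
The plan is to analyse the structure of $(n,l)$-AS-trapezoids whose associated centred Catalan set $S$ contains $\{-1,1\}$ (respectively $\{-2,-1,1,2\}$) using the $(S,l)$-tree picture from Proposition \ref{prop: bij s,t-trees} and the equivalence-class decomposition developed in Lemmas \ref{lem: size of an equivalence class} and \ref{lem: number of equivalence classes}. Recall that $w_l(S)=\sum_{\overline c}\binom{l-1}{f(\overline c)}$, where the sum is over equivalence classes $\overline c$ of $(S,l)$-trees and $f(\overline c)$ denotes the number of free columns. Since $\binom{l-1}{f}$ is a polynomial in $l$ vanishing at $l=1,\dots,f$, the factor $(l+1)$ cannot be read off directly from this expression; instead I would exploit the concatenation structure. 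By Proposition \ref{prop: multiplicity of weights} the problem reduces to irreducible $S$ (a concatenation that contains $\{-1,1\}$ must have its first irreducible component contain $\{-1,1\}$, and the shift operator $\s_l$ preserves divisibility of the relevant Pochhammer factors in $l$), so I may assume $S$ irreducible with $m_1(S)=1$, i.e.\ $\M(S)$ starts with a north-east step.

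For the forward direction, the key observation is that when $\{-1,1\}\subseteq S$ the second row of every $(n,l)$-AS-trapezoid $A$ with $\S(A)=S$ is highly constrained: by Lemma \ref{lem: structure of ones} the number of $1$'s in row $2$ is at most $1+m_1+m_2=2$, and the columns with positive partial column-sum below row $3$ are forced to include the columns corresponding to $\pm1$ (after the relabelling defining $\S$). Translating to $(S,l)$-trees this says the bottom two rows of $T_A$ have a prescribed shape: the entry directly above $0$ in the bottom row is forced, and the first north-east and south-east diagonals begin with the fixed values $s$ and $l+s'$ coming from $s_1,s_n$. I would then show that the generating polynomial for the remaining entries, summed over the allowed fillings, is (up to a unit) a specialisation in which the variable attached to the ``first free slot'' ranges over an interval of length $l+1$, producing the factor $(l+1)$ by the elementary identity $\sum_{j=0}^{l} p(j)=(l+1)q(l)$ being false in general --- so more precisely I would instead use the constant-term formula of Theorem \ref{thm: constant term identity}: when $1\in S$ and $-1\in S$ the product $\prod_{i=u+1}^n x_i^{2-l}(1+x_i)^l$ contributes an $(1+x_i)^l$ factor to exactly the variable indexed by $s_{u+1}=1$, and extracting the constant term with respect to that single variable produces a sum $\sum_{j}\binom{l}{j}(\cdots)$ which, combined with the constraint $s_1=-1$ forcing the matching $x_1$-exponent, telescopes to a multiple of $(l+1)$. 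The case $\{-2,-1,1,2\}\subseteq S$ is handled by iterating this: now two $(1+x_i)^l$ factors (for $s_{u+1}=1,s_{u+2}=2$) and two fixed negative entries $s_1=-2,s_2=-1$ are available, and the double constant-term extraction yields a multiple of $(l+1)(l+2)$.

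For the converse directions I would argue contrapositively and combinatorially. If $1\notin S$ or $-1\notin S$, then (WLOG by symmetry, although note the symmetry is only approximate because of the ``first non-zero entry from top is positive'' rule, so both subcases need checking) one exhibits explicit $(n,l)$-AS-trapezoids with $\S(A)=S$ for small $l$ --- in particular an $(S,l)$-tree with zero free columns exists, contributing $\binom{l-1}{0}=1$ to $w_l(S)$, and one shows $w_1(S)\neq 0$, which already rules out $(l+1)\mid w_l(S)$ being forced... wait, $(l+1)$ vanishes at $l=-1$, not $l=1$, so instead I evaluate $w_l(S)$ at $l=-1$: I claim $w_{-1}(S)\neq 0$ precisely when $\{-1,1\}\not\subseteq S$, by identifying $w_{-1}(S)$ (via analytic continuation of the polynomial, legitimate since Theorem \ref{thm: Catalan weight is polynomial} guarantees polynomiality) with a signed count of $(S,-1)$-trees that collapses to a nonzero alternating-sum when the forced-entry obstruction above is absent. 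Likewise $(l+1)(l+2)\mid w_l(S)$ is tested at $l=-1$ and $l=-2$. The main obstacle I anticipate is the converse: showing $w_{-1}(S)\neq 0$ (resp.\ one of $w_{-1}(S),w_{-2}(S)$ nonzero) when the containment fails requires either a clean nonvanishing criterion for the operator expression \eqref{eqn: s_t_trees} at negative $l$, or an explicit family of $(S,l)$-trees whose count is manifestly nonzero mod the relevant linear factor; pinning down the sign bookkeeping in the alternating sum is where the real work lies, and it is presumably also why the technique does not extend past $i=2$, since for $i\ge 3$ the Pochhammer factor $\prod_{k}(l+1+3k)_{i-2k}$ has repeated roots and degree growing faster than the number of ``forced rows'' the Lemma \ref{lem: structure of ones} bound supplies.
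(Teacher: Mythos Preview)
Your overall architecture --- reduce to irreducible $S$, then test vanishing at $l=-1$ and $l=-2$ --- is exactly the paper's. The key divergence is that you dismissed the summation argument too quickly. You wrote that ``$\sum_{j=0}^l p(j)=(l+1)q(l)$'' is false in general, but in fact it is true: if $f(l,x)$ is a polynomial in $l$ and $x$, then $P(l):=\sum_{x=0}^l f(l,x)$ extends to a polynomial in $l$ satisfying $P(-1)=0$ (the sum from $0$ to $-1$ is empty under the standard polynomial extension), hence $(l+1)\mid P(l)$. The paper uses precisely this: when $\{-1,1\}\subset S$, the bottom two rows of an $(S,l)$-tree are $0,l$ with a single entry $a\in\{0,\dots,l\}$ above, and $w_l(S)=\sum_{a=0}^l f(l,a)$ for a polynomial $f$; this vanishes at $l=-1$. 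The $\{-2,-1,1,2\}$ case is the same idea one layer deeper, with a careful evaluation of the nested sum at $l=-2$. Your detour through the constant-term formula is unnecessary and, as written, too vague to constitute a proof.

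For the converse of the first equivalence, the paper's argument is much shorter than what you sketch: if $S$ is irreducible and $(l+1)\mid w_l(S)$ then $\deg w_l(S)\geq 1$, so by Theorem~\ref{thm: Catalan weight is polynomial} we have $\ar(\M(S))\geq 1$, forcing $m_1(S)=1$ (an irreducible Motzkin path of positive area must start with an up-step), i.e.\ $\{-1,1\}\subset S$. No alternating-sum computation is needed here.

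The real gap in your plan is the converse of the second equivalence: showing $w_{-2}(S)\neq 0$ when $\{-2,-1,1,2\}\not\subset S$. You correctly flag that this ``is where the real work lies'', but you do not propose a mechanism. The paper handles it by extending $(S,l)$-trees to negative $l$ as a signed count (weight $(-1)^{\#\{x_{i,j}>x_{i,j+1}\}}$), observing that $(S,-2)$-trees have a forced bottom shape, and then exhibiting a bijection that gives $-w_{-2}(S)=w_0(S')$ for a related centred Catalan set $S'$; since $w_0(S')>0$ combinatorially, nonvanishing follows. Without this (or an equivalent concrete construction) your converse argument is incomplete.
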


\begin{proof}
By Proposition \ref{prop: multiplicity of weights}, the weight function $w_l(S)$, where $S=S_1 \circ S_2$, factorises into $w_l(S) = w_l(S_1) w_{l+2|S_1|-2}(S_2)$. Hence, it suffices to prove the proposition for irreducible $S$.
The proof is based on the following fact. Let $p(x)$ be a polynomial in $x$ and define
\begin{equation}
\label{eq: sum of poly}
 P(x)=\begin{cases}
       \sum_{i=0}^x p(i) \quad &x \geq 0,\\
       0 &x=-1,\\
       -\sum_{i=x+1}^{-1} p(i) & x<0.       
      \end{cases}
\end{equation}
Then $P(x)$ is again a polynomial in $x$.
Let $\{-1,1\} \subset S$. Then, an $(S,l)$-tree has the form
 \begin{align*}
\begin{array}{ccccc}
    \ddots & & a & & \iddots \\
    & 0 & & l
  \end{array},
 \end{align*} 
where $0 \leq a \leq l$. Let $f(l,x)$ denote the number of $(S,l)$-trees such that the entry $a$ in the second row from the bottom is equal to $x$. By Theorem \ref{thm: s_t_trees}, $f(l,x)$ is a polynomial in $l$ and $x$. The weight $w_l(S)$ is given by
\[
 w_l(S)=\sum_{x=0}^l f(l,x).
\]
Hence for $l=-1$, \eqref{eq: sum of poly} implies
\[
 w_{-1}(S)=\sum_{x=0}^{-1} f(-1,x)=0.
\]
Assume $(l+1)|w_l(S)$. Since $S$ is irreducible and $w_l(S)$ has degree at least one, $\{-1,0,1\}$ must be a subset of $S$, which proves the first claim.\\

Let $\{-2,-1,1,2\} \subset S$. Then, an $(S,l)$-tree has the form
 \begin{align*}
\begin{array}{ccccccc}
\ddots & & a &  & b & &\iddots \\
   & -1 & & c & & l+1  \\
  &  & 0 & & l
  \end{array},
 \end{align*} 
with $-1 \leq a \leq c \leq b \leq l+1$, $0 \leq c \leq l$ and $a < b$.
Let $f(l,x,y)$ denote the number of $(S,l)$-trees where the entries $a,b$ in the third row from the bottom are equal to $x$ and $y$ respectively.
The weight function is given by
\[
w_l(S)=\sum_{c=0}^l\left( \sum_{a=-1}^c \sum_{b=c+1}^{l+1} f(l,a,b) +\sum_{a=-1}^{c-1} f(l,a,c) \right)
\]
For $l=-2$, the sum $\sum_{c=0}^{-2}$ is equal to $-\sum_{c=-1}^{-1}$. Thus, for the weight function we obtain
\begin{multline*}
w_{-2}(S)=-\left( \sum_{a=-1}^{-1} \sum_{b=0}^{-1} f(-2,a,b) +\sum_{a=-1}^{-2} f(-2,a,-1) \right)\\
= -\left( \sum_{a=-1}^{-1} 0 +0 \right)=0.
\end{multline*}
Assume $\{-2,-1,-1,2\}$ is not a subset of $S$. We can extend our definition of an $(S,l)$ tree using the generalisation of \eqref{eq: sum of poly}. The enumeration then becomes a weighted enumeration, where the weight of an $(S,l)$-tree is $(-1)$ to the power of encounters of entries $x_{i,j}> x_{i,j+1}$. Hence an $(S,-2)$-tree has the form
\begin{equation*}
\begin{array}{c c c c c c}
\ddots & & \vdots & & \iddots\\
& -1 && -1\\
&& 0 && -2
\end{array}  \quad ,\quad \qquad
\begin{array}{c c c c c c}
\ddots & & \vdots & & \iddots\\
& -1 && -1\\
 0 && -2
\end{array} \quad ,
\end{equation*}
where the left corresponds to an $(S,-2)$-tree with $\{-2,-1,0,1\} \subseteq S$ and the right to $\{-1,0,1,2\} \subseteq S$, and their weight is $-1$. By deleting the bottom row and adding 1 to all entries, we obtain an $(S^\prime,0)$-tree with $S^\prime=\{i+1|i \in S, i<-2\} \cup \{-1,0,1\} \cup \{i-1, i \in S, i>2\}$. Hence in both cases
\[
-w_{-2}(S) = |w_0(S^\prime)|
\]
holds. By the definition of an $(S,l)$-tree, it is obvious that for $l \geq 0$ there is at least one $(S,l)$-tree, hence
$w_0(S^\prime) \neq 0$, proving the claim.
\end{proof}

\subsection{Rational roots of $w_l(M)$}
\label{sec: roots of M}

\begin{prop}
\label{prop: adding a height one piece}
Let $M$ be a Motzkin path of length $n$ that ends with a south-east step, and let $M^\prime$ denote the Motzkin path of length $n+1$ obtained by putting an east step in front of the last step of $M$. The weight $w_l(M^\prime)$ is given by
\[
w_l(M^\prime)=(l+2n)w_l(M).
\]
\end{prop}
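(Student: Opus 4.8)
The plan is to exploit the multiplicativity of the weight function from Proposition~\ref{prop: multiplicity of weights} together with the structural analysis of $(S,l)$-trees developed in Section~\ref{sec: refined enum of AS-ts}. Since $w_l(M) = \sum_{S : \M(S) = M} w_l(S)$, it suffices to understand, for each centred Catalan set $S$ with $\M(S) = M$, how the weight changes when we pass to a set $S'$ with $\M(S') = M'$. The key observation is that $M$ ends with a south-east step, and $M'$ is obtained by inserting an east step just before that final south-east step. In terms of centred Catalan sets, appending an east step at position $n+1$ corresponds to adding exactly one of $\{-(n+1), n+1\}$ (recall $m_i(S) = |\{-i,i\} \cap S| - 1$, so an east step means precisely one of the two is present), while the final south-east step means neither of the two endpoints at the last position lies in $S$. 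So I would set up an explicit bijection-with-weights, or rather a correspondence, between $(S,l)$-trees for the ``$M$'' sets and $(S',l)$-trees for the ``$M'$'' sets, tracking how the extra east step introduces one new diagonal entry into the $\st$-tree.

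Concretely, first I would reduce to $S$ irreducible: a Motzkin path ending in a south-east step need not be irreducible, but if $M = M_1 \circ M_2$ with $M_2$ ending in the south-east step, then $M' = M_1 \circ M_2'$, and by Proposition~\ref{prop: multiplicity of weights} (the Motzkin form \eqref{eq: multiplicity of Motzkin weights}) we get $w_l(M') = w_l(M_1) w_{l+2|M_1|}(M_2')$ and $w_l(M) = w_l(M_1) w_{l+2|M_1|}(M_2)$, so it is enough to prove the identity for $M_2$, i.e.\ we may assume the path is irreducible. Second, using the bijection $A \mapsto T_A$ of Proposition~\ref{prop: bij s,t-trees} and Theorem~\ref{thm: s_t_trees}, I would write $w_l(M)$ and $w_l(M')$ as sums over the relevant centred Catalan sets of the operator-formula expressions \eqref{eqn: s_t_trees}. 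The passage from $M$ to $M'$ increases $n$ by one and inserts one more bottom entry $k_{\text{new}}$ into $\mathbf{k}$; because the inserted step is an east step (adjacent to the terminal south-east step), the new entry sits at the far end of a diagonal and is essentially free to range over an interval of length governed by $l + 2n$. Summing a polynomial over such an interval — via the telescoping identity $\sum_{x=0}^{L} p(x)$ being a polynomial, exactly the device used in the proof of Proposition~\ref{prop: divisibility by 2j+1} — produces the factor $(l + 2n)$ as the length of the summation range, while the remaining sum reproduces $w_l(M)$ verbatim.

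The main obstacle I expect is verifying that the new bottom entry genuinely contributes an \emph{independent} summation over a range of size $l + 2n$, with the rest of the $\st$-tree data left untouched, i.e.\ that there is no interaction (no ordering constraint that would truncate the range or couple it to the other entries). This is where the hypothesis that $M'$ inserts the east step \emph{immediately before} the final south-east step is essential: it guarantees the new entry lands at the extreme of a diagonal with the terminal south-east step below it, so the only constraints are $0 \le x \le l + 2n - 1$ (or the analogous half-open interval), giving exactly $l + 2n$ choices. I would make this precise either by carefully unwinding the box-complex / skew-shape picture from Lemma~\ref{lem: number of equivalence classes}, or by a direct manipulation of the operator expression \eqref{eqn: s_t_trees}, pulling out the single new forward/backward-difference-free variable and evaluating. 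A clean alternative worth attempting is to phrase everything in the AS-trapezoid picture directly: adding the east step corresponds to a controlled one-row extension of the trapezoid, and the $(l+2n)$ is literally the number of admissible positions for the single extra non-zero entry forced by the row-sum condition, with the south-east step ensuring no compensating $-1$ is needed; this may be the most transparent route and the one I would write up.
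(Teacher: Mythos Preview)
The paper's proof takes your second alternative --- the direct AS-trapezoid route --- but the mechanism is not the one you describe. It is \emph{not} a simple one-row extension in which the terminal south-east step rules out any $-1$: passing from an $(n,l)$-trapezoid $A$ with $\M(A)=M$ to an $(n+1,l)$-trapezoid $A'$ with $\M(A')=M'$ requires analysing the top \emph{two} rows of $A'$ simultaneously, and the second-to-top row of $A'$ can perfectly well contain a $-1$. Indeed, since $m'_n=0$, Lemma~\ref{lem: structure of ones} allows up to two $1$'s in row $n$ of $A'$, hence possibly one $-1$ there. The paper splits into these two cases (row $n$ of $A'$ has no $-1$, or exactly one $-1$), shows in each case how to collapse the top two rows of $A'$ to the single top row of some $A$ with $\M(A)=M$, and then counts preimages: each $A$ lifts to $4$ trapezoids $A'$ of the first kind and $l+2(n-2)$ of the second, giving $4+(l+2n-4)=l+2n$ in total. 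So the factor $l+2n$ does \emph{not} arise as the length of a single free interval for one new entry.

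Your $(S,l)$-tree approach is a genuinely different route and could in principle be made to work, but note what it actually demands: for each individual $S$ with $\M(S)=M$ you would need
\[
w_l(S\cup\{n\})+w_l(S\cup\{-n\})=(l+2n)\,w_l(S),
\]
and the two summands on the left are \emph{not} equal in general (try $S=\{-1,0,1,2\}$ against the table in Appendix~\ref{app: Data}). Hence the ``new bottom entry ranges freely over an interval of length $l+2n$'' picture cannot hold for a single $S'$; the factor only emerges after summing the two choices of sign, which makes the tree/operator argument considerably more delicate than your one-line heuristic suggests. The preliminary reduction to irreducible $M$ via Proposition~\ref{prop: multiplicity of weights} is also unnecessary: the paper's bijective count works uniformly.
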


\begin{proof}
Let $A^\prime$ be an $(n+1,l)$-AS-trapezoid with $\M(A^\prime)=M^\prime$. If $A^\prime$ doesn't have a $-1$ in the second to last row from the top, the last two rows have the form
\[
\begin{array}{c	c c c c c}
0 &1 &0\, \cdots\, 0 &0 &0\, \cdots \,0 &0\\
&0 &0 \,\cdots\, 0 &1 &0 \,\cdots \,0
\end{array}
\]
up to horizontal and vertical reflection of the inner $l+2n-1$ columns. By reflecting the top two rows in such a way that one obtains the above form, and deleting the top row, we obtain an $(n,l)$-AS-trapezoid $A$ with $\M(A)=M$.
Now assume $A^\prime$ has a $-1$ entry in its second to last row from the top. Then the two top rows have the form 
\[
\begin{array}{c	c c c c c c c}
0 &0 &0\, \cdots\, 0 &1 &0\, \cdots \,0 &0 &0\, \cdots \,0 &0\\
&1 &0 \,\cdots\, 0 &-1 &0 \,\cdots \,0 &1 &0 \,\cdots \,0
\end{array}
\]
up to horizontal reflection. If we delete the top row and the leftmost $1$ and $-1$ in the second row from the top, we again obtain an $(n,l)$-AS-trapezoid $A$ with $\M(A)=M$.
Conversely, let $A$ be an $(n,l)$-AS-trapezoid. We can construct $4$ AS-trapezoids $A^\prime$ with no $-1$ in the second row from the top and $l+2(n-2)$ AS-trapezoids $A^\prime$ with one $-1$ entry in the second row from the top, such that $\M(A^\prime)=M^\prime$. This proves the claim.
\end{proof}

\begin{conj}
Let $M$ be a Motzkin path of length $n+k$. We let $M=(\ldots, e_1, \ldots, e_k)$ indicate that the last $k$ steps of $M$ are $e_1, \ldots, e_k$. Then the following hold.
\begin{align*}
&M= (\ldots, 1,0,-1,-1)  \Rightarrow (l+2n+5)|w_l(M),\\
&M= (\ldots, 1,0,0,-1,-1)  \Rightarrow (l+2n+7)|w_l(M),\\
&M= (\ldots, 1,1,-1,-1,-1)  \Rightarrow (l+2n+2)(l+2n+7)(l+2n+8)|w_l(M),\\
&M= (\ldots, 1,1,-1,0,-1,-1)  \Rightarrow (l+2n+2)(l+2n+7)|w_l(M),\\
&M= (\ldots, 1,1,-1,0,0,-1,-1)  \Rightarrow (l+2n+2)|w_l(M),\\
&M= (\ldots, 1,1,-1,0,0,0,-1,-1)  \Rightarrow (l+2n+2)|w_l(M),\\
&M= (\ldots, 1,0,1,-1,-1,-1)  \Rightarrow (l+2n+8)|w_l(M),\\
&M= (\ldots, 1,1,0,-1,-1,-1)  \Rightarrow (l+2n+2)(l+2n+8)^2|w_l(M),\\
&M= (\ldots, 1,1,0,-1,0,-1,-1)  \Rightarrow (l+2n+2)|w_l(M),\\
&M= (\ldots, 1,1,0,-1,0,0,-1,-1)  \Rightarrow (l+2n+2)|w_l(M),\\
&M= (\ldots, 1,1,0,-1,0,0,0,-1,-1)  \Rightarrow (l+2n+2)|w_l(M),\\
&M= (\ldots, 1,1,0,0,-1,-1,-1)  \Rightarrow (l+2n+2)(l+2n+10)|w_l(M).\\
\end{align*}
\end{conj}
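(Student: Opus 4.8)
\emph{Proof proposal.} The plan is to establish each divisibility by showing that the polynomial $w_l(M)$ --- which is a polynomial in $l$ since $w_l(M)=\sum_{S\colon \M(S)=M}w_l(S)$ and each $w_l(S)$ is one by Theorem \ref{thm: Catalan weight is polynomial} --- vanishes at the prescribed negative integer with at least the multiplicity indicated by the exponent of the corresponding factor; that is, for a factor $(l+2n+c)^e$ one must show that $l=-(2n+c)$ is a root of $w_l(M)$ of order at least $e$.

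First I would reduce to irreducible $M$. Every listed tail begins with a north-east step and, as one checks case by case, reaches height $0$ only at its final vertex, so it lies entirely inside the last irreducible component of $M$. Writing $M=M_1\circ M_2$ with $M_2$ that component, Proposition \ref{prop: multiplicity of weights} gives $w_l(M)=w_l(M_1)\,w_{l+2|M_1|}(M_2)$; since $|M|=n+(\text{tail length})$ we have $|M_1|+\bigl(|M_2|-(\text{tail length})\bigr)=n$, so a factor $\bigl(l'+2(|M_2|-(\text{tail length}))+c\bigr)$ of $w_{l'}(M_2)$ turns into the factor $(l+2n+c)$ of $w_l(M)$ under $l'=l+2|M_1|$. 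Hence it suffices to prove each statement for an irreducible $M$ with the given ending and $n=|M|-(\text{tail length})$.

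For such an $M$ I would analyse the top rows of an $(|M|,l)$-AS-trapezoid $A$ with $\M(A)=M$, in the spirit of the proof of Proposition \ref{prop: adding a height one piece}: the last steps of $M=\M(\S(A))$ fix, for $i$ close to $|M|$, which of $\{-i,i\}$ lie in $\S(A)$, and so pin down the top few rows of $A$ up to the reflections allowed by the equivalence relation of Section \ref{sec: refined enum of AS-ts}. For the simpler endings, e.g. $M=(\dots,1,0,-1,-1)$ and $M=(\dots,1,0,0,-1,-1)$ (both giving $l+2|M|-3$), I expect this to yield a clean recursion $w_l(M)=(l+2|M|-3)\,w_l(M^{-})$, with $M^{-}$ the shorter path obtained by deleting one or two top rows and undoing the reflections, exactly parallel to $w_l(M')=(l+2n)w_l(M)$ in Proposition \ref{prop: adding a height one piece}; for the families $M=(\dots,1,1,-1,0,\dots,0,-1,-1)$ and $M=(\dots,1,1,0,-1,0,\dots,0,-1,-1)$, removing top rows one at a time should leave only the single factor $(l+2n+2)$, obtained by an induction on $|M|$. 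When no single-step recursion is visible I would instead pass to $(S,l)$-trees via Proposition \ref{prop: bij s,t-trees} and run the polynomial-summation argument of Proposition \ref{prop: divisibility by 2j+1}: by Theorem \ref{thm: s_t_trees} the number of $(S,l)$-trees with some rows deleted is a polynomial in $l$ and in the freed entries, $w_l(M)$ is a sum of such polynomials over ranges of the form $0\le x\le l+\mathrm{const}$, and passing to the signed extension defined by \eqref{eq: sum of poly} and specialising $l$ to the critical value collapses these sums to empty or pairwise cancelling ones, so that $w_{-(2n+c)}(M)=0$.

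The hard part is twofold. First, the top-row case analysis is genuinely intricate for the six-, seven- and eight-step endings, with a fair amount of case-by-case bookkeeping and no single reduction covering all twelve patterns. Second, and more serious, the factors that occur to a power $\ge 2$ --- the pairs such as $(l+2n+7)(l+2n+8)$ occurring together and the square $(l+2n+8)^2$ --- require a \emph{multiple} root of $w_l(M)$, which the summation trick cannot detect: one would need a genuine two-stage reduction $w_l(M)=(l+2|M|-c_1)(l+2|M'|-c_2)\,w_l(M'')$, or else to differentiate and re-run the argument on $\frac{d}{dl}w_l(M)$, neither of which is routine. As the remark after Proposition \ref{prop: divisibility by 2j+1} already warns, the summation method does not extend past the simplest patterns, so these cases likely need a new idea; the most promising candidate is to sum the constant-term formula of Theorem \ref{thm: constant term identity} over all $S$ with $\M(S)=M$ (equivalently, with the multiplicities $|\{-i,i\}\cap S|$ prescribed) and to prove vanishing of the resulting constant term at $l=-(2n+c)$ through a substitution or antisymmetry of the integrand --- but pinning down that symmetry is itself the crux.
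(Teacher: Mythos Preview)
The paper does not prove this statement: it is explicitly presented as a \emph{conjecture} (in the \texttt{conj} environment of Section~\ref{sec: roots of M}), with no proof or proof sketch, only the comment that ``the list of conjectures of the above kind can be continued'' together with some heuristic remarks about which endings $e_1,\ldots,e_k$ seem to matter. So there is no ``paper's own proof'' to compare against.

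Your proposal is honest about its own status: it is a strategy outline rather than a proof, and you correctly flag the two real obstacles. The reduction to irreducible $M$ via Proposition~\ref{prop: multiplicity of weights} is fine and matches how the paper handles analogous reductions. The two tools you reach for --- a top-row recursion in the style of Proposition~\ref{prop: adding a height one piece}, and the signed-summation collapse in the style of Proposition~\ref{prop: divisibility by 2j+1} --- are exactly the techniques the paper has available, and the paper itself already notes (after Proposition~\ref{prop: divisibility by 2j+1}) that the summation method ``does not extend'' beyond the simplest cases. Your diagnosis that multiplicities $\geq 2$ (e.g.\ the factor $(l+2n+8)^2$) cannot be detected by a single vanishing argument is the genuine obstruction, and neither you nor the paper has a mechanism for it. The constant-term idea at the end is plausible but speculative: summing Theorem~\ref{thm: constant term identity} over the $2^{k}$ centred Catalan sets with a fixed $\M(S)$ does not obviously produce a symmetry that forces vanishing at the relevant $l$, and you would still need to handle the derivative for the squared factor.

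In short: this is an open problem in the paper, and your proposal is a reasonable plan of attack for the single-factor cases but does not close the gap for the higher-multiplicity ones --- which is consistent with the statement's conjectural status.
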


It seems that the list of conjectures of the above kind can be continued as long as one likes. Experiments suggest that the steps $e_1, \ldots, e_k$ at the end must satisfy $e_1=1$ and $\sum_{i=1}^k e_i=-1$, i.e., the Motzkin path before these steps ``ends'' at height $1$. Further, it is interesting that all rational roots of the shortest Motzkin path with one of the above endings, with the exception of the path with the $7,9,12$-th ending, are explained by the above conjecture and Conjecture \ref{conj: roots of cCs}.

\section*{Acknowledgements}
The author would like to thank Arvind Ayyer for giving him access to his conjectures on the Motzkin path refinement of ASTs, which significantly influenced this work, as well as an anonymous referee for helpful suggestions and Yvonne Kemper for her assistance with the language.

\begin{appendix}

\section{tables for $w_l(S)$ and $w_l(M)$}
\label{app: Data}

The following two tables list the weight functions of all irreducible centred Catalan sets of size less than $6$ up to the reflection $S \mapsto \{-s:s\in S\}$, and the weight functions of all irreducible Motzkin paths up to length $5$.

\begin{tabular}{l  l }
            $S$ & $w_l(S)$\\
            \hline
              $\{0,1\}$ & $1$\vspace{3 pt}\\
	      $\{-1,0,1\}$ & $(l+1)$\vspace{3 pt}\\
	      $\{-1,0,1,2\}$ & $\frac{1}{2}(l+1)(l+4)$\vspace{3 pt}\\
	      $\{-2,-1,0,1,2\}$ & $\frac{1}{12}(l+1)(l+2)(l+6)(l+7)$ \vspace{3 pt}\\
	      $\{-3,-1,0,1,2\}$ & $\frac{1}{6}(l+1)(l+6)(2l+7)$\vspace{3 pt}\\
	      $\{-1,0,1,2,3\}$ & $\frac{1}{6}(l+1)(l+5)(l+6)$\vspace{3 pt}\\
	      $\{-2,-1,0,1,2,3\}$ & $\frac{1}{144}(l+1)(l+2)(l+7)(l^3+23l^2+168l+360)$\vspace{3 pt}\\
	      $\{-2,-1,0,1,2,4\}$ & $\frac{1}{24}(l+1)(l+2)(l+6)(l+7)(l+8)$\vspace{3 pt}\\
	      $\{-3,-1,0,1,2,3\}$ & $\frac{1}{24}(l+1)(l^4+25l^3+226l^2+864l+1176)$\vspace{3 pt}\\
	      $\{-1,0,1,2,3,4\}$ & $\frac{1}{24}(l+1)(l+6)(l+7)(l+8)$\vspace{3 pt}\\
	      $\{-2,-1,0,1,3,4\}$ & $\frac{1}{24}(l+1)(l+6)(3l^2+37l+92)$\vspace{3 pt}\\
	      $\{-3,-1,0,1,2,4\}$ & $\frac{1}{24}(l+1)(l+6)(5l^2+55l+132)$\vspace{3 pt}\\
	      $\{-4,-1,0,1,2,3\}$ & $\frac{1}{24}(l+1)(l+6)(l+8)(3l+13)$
    \end{tabular}
\newline
       
	\begin{tabular}{cl}
           $M$ & $w_l(M)$\\
           \hline
           \includegraphics[scale=0.5]{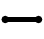} & $2$\vspace{2 pt}\\
           \includegraphics[scale=0.5]{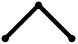} & $(l+1)$\vspace{2 pt}\\
           \includegraphics[scale=0.5]{m3} & $(l+1)(l+4)$\\
           \includegraphics[scale=0.5]{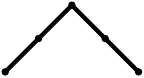} & $\frac{1}{12}(l+1)(l+2)(l+6)(l+7)$\vspace{2 pt}\\
           \includegraphics[scale=0.5]{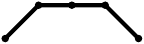} & $(l+1)(l+4)(l+6)$\\
           \includegraphics[scale=0.5]{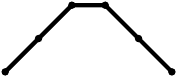} & $\frac{1}{72}(l+1)(l+2)(l+7)(l^3+23l^2+168l+360)$\\
           \includegraphics[scale=0.5]{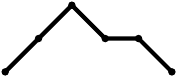} & $\frac{1}{12}(l+1)(l+2)(l+6)(l+7)(l+8)$\\
           \includegraphics[scale=0.5]{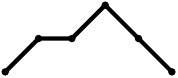} & $\frac{1}{12}(l+1)(l^4+25l^3+226l^2+864l+1176)$\vspace{2 pt}\\
           \includegraphics[scale=0.5]{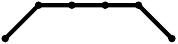} & $(l+1)(l+4)(l+6)(l+8)$\vspace{2 pt}
       \end{tabular}

\end{appendix}

\bibliographystyle{abbrv}
\bibliography{ASTs}

\end{document}